\documentclass[preprint,12pt]{elsarticle}
\usepackage{amsmath,amssymb,amsthm}
\usepackage{cases}
\usepackage{xcolor}
\usepackage{float}
\usepackage{hyperref}

\hypersetup{
 hidelinks,
 pdftitle={Improved Global Boundedness Criteria via Refined Bootstrap Estimates for a Quasilinear Chemotaxis-May-Nowak System},
 pdfauthor={Xueli Bai, Fang Li, Suying Liu, and Xuxuan Luo},
 pdfsubject={Global boundedness in a quasilinear chemotaxis-May-Nowak system},
 pdfkeywords={Chemotaxis-May-Nowak system; nonlinear diffusion; global boundedness; parabolic-elliptic-parabolic system; fully parabolic system}
}

\numberwithin{equation}{section}

\newtheorem{thm}{Theorem}[section]

\newtheorem{lem}[thm]{Lemma}
\newtheorem{prop}[thm]{Proposition}

\journal{Journal of Differential Equations}
\biboptions{sort&compress}
\begin{document}

\begin{frontmatter}

\title{Improved Global Boundedness Criteria via Refined Bootstrap
Estimates for a Quasilinear Chemotaxis--May--Nowak System}

\author[nwpu]{Xueli Bai}
\ead{xlbai2015@nwpu.edu.cn}
\author[sysu]{Fang Li}
\ead{lifang55@mail.sysu.edu.cn}
\author[nwpu]{Suying Liu\corref{cor1}}
\ead{suyingliu@nwpu.edu.cn}
\author[sysu]{Xuxuan Luo}
\ead{luoxx35@mail2.sysu.edu.cn}
\cortext[cor1]{Corresponding author.}

\address[nwpu]{School of Mathematics and Statistics, Northwestern
Polytechnical University, 1 Dongxiang Road, Chang'an District, Xi'an,
Shaanxi 710129, P.R. China}
\address[sysu]{School of Mathematics, Sun Yat-sen University, 135 Xingang
West Road, Haizhu District, Guangzhou 510275, P.R. China}

\begin{abstract}
We study a quasilinear chemotaxis--May--Nowak system with nonlinear diffusion
growing at least as $s^{m-1}$ at high densities and derive improved criteria
for global existence and uniform boundedness. For the
parabolic--elliptic--parabolic system, we establish global boundedness when
$m>3-\frac{4}{n}$ for $n\in\{2,3\}$ and when $m>1+\frac{n}{4}$ for $n\ge4$.
For the fully parabolic system, the same conclusion holds under
$m>1+\frac{n}{4}$ for $n\ge4$. The proof relies on two refined estimates.
Weighted space--time control of the nonlinear infection source yields the
low-order $L^q$-bounds for the free virus density needed to initiate the
bootstrap in the parabolic--elliptic--parabolic case. Subsequently, preserving
the product structure of this source in the interpolation argument allows
the chemotactic contribution to be absorbed more efficiently by the
nonlinear diffusion. These estimates improve the previously known sufficient
conditions for global boundedness.
\end{abstract}

\begin{keyword}
Chemotaxis--May--Nowak system \sep nonlinear diffusion \sep global
boundedness \sep parabolic--elliptic--parabolic system \sep fully parabolic
system
\MSC[2020] 35K59 \sep 35B45 \sep 92C17
\end{keyword}

\end{frontmatter}

\section{Introduction}

Mathematical models for within-host virus dynamics provide a natural framework
for describing the coupled evolution of uninfected cells, infected cells, and
free virus particles. When spatial heterogeneity and directed cell movement
are incorporated, such models can capture localized infection patterns that
are beyond the scope of spatially homogeneous systems. Their analysis is
therefore important for understanding how diffusive and taxis-driven
mechanisms interact to determine the spatio-temporal behavior of viral
infections.

This paper is concerned with the following quasilinear
chemotaxis--May--Nowak system for virus infection:
\begin{numcases}{}
 u_t=\nabla\cdot(D(u)\nabla u)-\chi\nabla\cdot(u\nabla v)-u-uw+\varphi,
 & $x\in\Omega,\ t>0$,\nonumber\\
 \kappa v_t=\Delta v-v+uw,
 & $x\in\Omega,\ t>0$,\nonumber\\
 w_t=\Delta w-w+v,
 & $x\in\Omega,\ t>0$,\nonumber\\
 \dfrac{\partial u}{\partial\nu}
 =\dfrac{\partial v}{\partial\nu}
 =\dfrac{\partial w}{\partial\nu}=0,
 & $x\in\partial\Omega,\ t>0$,\nonumber\\
 u(x,0)=u_0(x),\quad
 \kappa v(x,0)=\kappa v_0(x),
 & $x\in\Omega$,\nonumber\\
 w(x,0)=w_0(x),\qquad x\in\Omega\label{sys}
\end{numcases}
in a bounded smooth domain $\Omega\subset\mathbb R^n$, $n\ge2$.
Here $u$, $v$, and $w$ denote the densities of uninfected cells, infected
cells, and free virus particles, respectively. The infection process is
represented by the nonlinear coupling $uw$, while the taxis term
$-\chi\nabla\cdot(u\nabla v)$ describes directed movement of uninfected cells
toward regions with higher densities of infected cells. The nonlinear
diffusion $\nabla\cdot(D(u)\nabla u)$ accounts for density-dependent dispersal
of uninfected cells, and the nonnegative function $\varphi$ models their
external supply. The parameter $\chi>0$ is the chemotactic sensitivity, and
$\kappa=1$ corresponds to the fully parabolic system, whereas $\kappa=0$
gives the parabolic--elliptic--parabolic system.

System \eqref{sys} originates from spatial extensions of the classical May--Nowak model
for within-host viral dynamics, which describes the interaction of uninfected
cells, infected cells, and free virions \cite{maynowak2, maynowak}.
{While its spatially homogeneous form has provided important
insight into viral persistence and treatment \cite{may3,relevantresults4},
spatial heterogeneity and directed cell movement can play an essential role in
the formation of highly concentrated infection patterns.} {To
account for these effects,} Stancevic et al.
\cite{chemotaxismodel} incorporated diffusion and chemotaxis into the May--Nowak
dynamics. Further developments of this model have led to various
chemotaxis--May--Nowak systems, including the quasilinear formulation with
nonlinear diffusion considered in \eqref{sys}; see, in particular,
\cite{BellomoPainterTaoWinkler2019,TaoWinkler2021Hotspots,wang2025}.

A first fundamental step in understanding the dynamics of \eqref{sys} is to
determine whether classical solutions exist globally in time or may develop
singularities in finite time. These two possibilities reflect the competition
between diffusive dispersion and taxis-driven aggregation. In the context of
virus dynamics, finite-time blow-up can be viewed as an idealized mathematical
description of singular infection hotspots, whereas global boundedness excludes
such unbounded concentration. The present paper focuses on the latter issue and
seeks sufficient conditions ensuring that classical solutions of \eqref{sys}
exist globally and remain uniformly bounded.

More specifically, we focus on the stabilizing effect of nonlinear diffusion.
For the prototype $D(u)=(1+u)^{m-1}$, the exponent $m$ measures the strength
of density-dependent dispersal at high cell densities. Since $D(u)\sim u^{m-1}$ as $u\to\infty$,
increasing $m$ strengthens
diffusion in high-density regions and thereby counteracts the
concentration induced by chemotaxis. The central question is therefore how large $m$ must be
for nonlinear diffusion to dominate the taxis mechanism and preclude
unbounded aggregation.

Throughout this paper, we assume that
\begin{numcases}{}
 D\in C^2([0,\infty)),\quad D(s)>0,\quad
 D(s)\ge k_Ds^{m-1}
 & for all $s\ge0$,\nonumber\\
 0\le\varphi\in C^0(\overline\Omega\times[0,\infty)),\quad
 \|\varphi\|_{L^\infty(\Omega\times(0,\infty))}\le \varphi_*.
 & \label{eq:basic_assumptions}
\end{numcases}
with some $k_D>0$. The strict positivity of $D$ is imposed because we consider
classical solutions. For $\kappa=0$, the initial data satisfy
\begin{numcases}{}
 u_0,w_0\in W^{2,\infty}(\Omega),
 & \nonumber\\
 u_0,w_0\ge0,\quad u_0\not\equiv0,\quad w_0\not\equiv0
 & in $\overline\Omega$,\nonumber\\
 \dfrac{\partial u_0}{\partial\nu}
 =\dfrac{\partial w_0}{\partial\nu}=0
 & on $\partial\Omega$.\label{eq1.2}
\end{numcases}
whereas for $\kappa=1$ we assume
\begin{numcases}{}
 u_0,v_0,w_0\in W^{2,\infty}(\Omega),
 & \nonumber\\
 u_0,v_0,w_0\ge0,\quad
 u_0\not\equiv0,\quad v_0\not\equiv0,\quad w_0\not\equiv0
 & in $\overline\Omega$,\nonumber\\
 \dfrac{\partial u_0}{\partial\nu}
 =\dfrac{\partial v_0}{\partial\nu}
 =\dfrac{\partial w_0}{\partial\nu}=0
 & on $\partial\Omega$.\label{eq1.3}
\end{numcases}

Questions of global existence, boundedness, and blow-up arising from the
competition between diffusion and chemotactic aggregation have been studied
extensively for Keller--Segel type systems; see the surveys
\cite{Horstmann2003,Horstmann2004,HillenPainter2009,Bellomo2015,Bellomo2022}
and the references therein. For chemotaxis--May--Nowak systems, Bellomo,
Painter, Tao and Winkler \cite{BellomoPainterTaoWinkler2019} investigated the
occurrence and suppression of taxis-driven instabilities in a fully
parabolic setting with linear diffusion, while Tao and Winkler
\cite{TaoWinkler2021Hotspots} established finite-time blow-up in a generalized
parabolic--elliptic--parabolic system, thereby rigorously confirming the
possibility of singular hotspot formation. Global boundedness has also been
obtained through regularizing mechanisms such as saturation of the
chemotactic sensitivity \cite{achemotaxissystem}.

For the quasilinear model \eqref{sys}, Wang and Wang
\cite{wang2025} initiated the study of global boundedness and blow-up
under nonlinear diffusion, and Zheng and Wang \cite{zheng2026}
subsequently improved several sufficient conditions for boundedness.
Building on these works, we develop refined $L^p$-estimates and obtain
further improvements of the known sufficient conditions for global
boundedness. {The parabolic--elliptic--parabolic and fully
parabolic cases require different estimates and are treated separately.} Our two main results
are as follows.

\begin{thm}\label{main_theorem_kappa=0}
Let $\kappa=0$ and $n\ge2$. Assume \eqref{eq:basic_assumptions} and
\eqref{eq1.2}, and suppose that
\begin{equation*}
 m>
 \begin{cases}
  1+\dfrac n4, & n\ge4,\\[3mm]
  3-\dfrac4n, & n\in\{2,3\}.
 \end{cases}
\end{equation*}
Then the solution of \eqref{sys} is global and there exists $C>0$ such that
$$
 \|u(\cdot,t)\|_{L^\infty(\Omega)}
 +\|v(\cdot,t)\|_{W^{1,\infty}(\Omega)}
 +\|w(\cdot,t)\|_{L^\infty(\Omega)}
 \le C
 \qquad\text{for all }t>0.
$$
\end{thm}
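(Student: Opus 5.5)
The plan is to run the standard bootstrap. It starts from basic a priori information and ends in an $L^p$ bound on $u$ for some $p>\frac n2$ (or larger), from which $L^\infty$ bounds follow by Moser iteration. Local existence with the extensibility criterion $\limsup_{t\to T_{\max}}(\|u\|_{L^\infty}+\|v\|_{W^{1,\infty}}+\|w\|_{L^\infty})=\infty$ is taken from the literature (as in \cite{wang2025}).

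\emph{Basic bounds.} Integrating the equations gives:
\begin{itemize}
\item from $u+v$, the relation $\frac{d}{dt}\int u+\int u+\int v=\int\varphi$, hence $\int u\le C$ and $\int_t^{t+1}\int v\le C$;
\item since $\kappa=0$, $\int v=\int uw$, so the infection source satisfies $\int_t^{t+1}\int uw\le C$;
\item the $w$-equation then yields $\int w\le C$.
\end{itemize}

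\emph{Low-order $L^q$ bounds for $w$ via weighted space--time estimates.} This is the step the abstract highlights, and I expect it to be the main obstacle. The elliptic equation $-\Delta v+v=uw$ only has an $L^1$-type source, so $\nabla v\in L^{r}$ for $r<\frac n{n-1}$, which is too weak. I would test the $u$-equation against weights like $(1+u)^{-\alpha}$, or $\ln(1+u)$, and combine this with testing the $w$-equation by $w^{q-1}$. The aim is space--time bounds of the form $\int_t^{t+1}\int uw\,(1+w)^{\beta}\le C$ and $\int_t^{t+1}\int |\nabla u^{(m-\alpha)/2}|^2\le C$. The chemotaxis term is handled through $\Delta v=v-uw$, which turns $\chi\int u\nabla v\cdot\nabla (1+u)^{-\alpha}$ into a signed term involving $uw$. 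Maximal Sobolev regularity for $w$, together with $v\in L^1$ in space--time, then gives $\sup_t\|w\|_{L^q}\le C$ for some $q$ above the critical threshold. The restriction $m>3-\frac4n$ for $n\in\{2,3\}$ should be exactly what makes this start possible.

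\emph{$L^p$ estimate for $u$ preserving the product structure.} Test the $u$-equation with $u^{p-1}$ to obtain
$$\frac1p\frac{d}{dt}\int u^p+\frac{(p-1)k_D\cdot 4}{(m+p-1)^2}\int|\nabla u^{\frac{m+p-1}2}|^2\le \frac{(p-1)\chi}{p}\int u^p(uw-v)+C.$$
Rather than estimating $\Delta v$ by elliptic regularity, keep the term $\int u^{p+1}w$. Use Hölder with $\|w\|_{L^q}$ and Gagliardo--Nirenberg for $u^{(m+p-1)/2}$ with the $L^1$ base. Absorption requires that the exponent of $\|\nabla u^{(m+p-1)/2}\|_{L^2}$ be $<2$, which reduces to $m>1+\frac{n}{2}\bigl(\frac1q-\ldots\bigr)$. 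With the available $q$ this gives $m>1+\frac n4$ for $n\ge4$, and $m>3-\frac4n$ suffices for $n=2,3$. Simultaneously, testing the $w$-equation with $w^{r-1}$ and using $v$ controlled in $L^{s}$ by $uw$ via elliptic regularity allows $p$ and $q$ to be raised alternately. The result is $\sup_t(\|u\|_{L^p}+\|w\|_{L^q})\le C$ for arbitrary finite $p,q$.

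\emph{Conclusion.} Elliptic $W^{2,s}$ estimates for $v$ with $uw\in L^s$ for large $s$ give $\|v\|_{W^{1,\infty}}\le C$. Semigroup estimates give $\|w\|_{L^\infty}\le C$, and Moser iteration (or the Tao--Winkler lemma for quasilinear equations with bounded drift) gives $\|u\|_{L^\infty}\le C$. By the extensibility criterion $T_{\max}=\infty$, and the bound holds for all $t>0$.
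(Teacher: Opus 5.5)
Your overall architecture (mass bounds, a low-order bound for $w$, $L^p$ bounds for $u$, then Tao--Winkler) matches the paper. For $n\in\{2,3,4\}$ your $L^p$ step is equivalent to Propositions~\ref{prop-u-2} and~\ref{prop-u-34}: keeping $\int_\Omega u^{p+1}w$ via $\Delta v=v-uw$ and dropping $-\int_\Omega u^pv$ gives the same exponent $(p+1)q'$ and the same absorption condition $m>2-\frac2n+\frac1q$. There are, however, two genuine gaps.

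\textbf{First, the low-order bound for $w$.} The route you dismiss as ``too weak'' is exactly the one that works, and it needs no assumption on $m$. You already have $\int_t^{t+1}\!\int_\Omega uw\le C$. For each fixed $s$, Brezis--Strauss gives $\|v(\cdot,s)\|_{L^q}\le C\|u(\cdot,s)w(\cdot,s)\|_{L^1}$ for all $q<\frac n{n-2}$. This bound is linear in the source, so it survives time integration. Testing the $w$-equation by $w^{q-1}$ then gives $\frac{d}{dt}\|w\|_{L^q}+\|w\|_{L^q}\le\|v\|_{L^q}$. The kernel $e^{-(t-s)}$ is nonsingular, so $\sup_t\|w\|_{L^q}\le C$ (Lemma~\ref{lem:winf}).

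Your substitute does not close. Maximal Sobolev regularity is unavailable for $L^1$ sources. The $L^1$--$L^q$ smoothing estimate carries the singular kernel $(t-s)^{-\frac n2(1-\frac1q)}$, which cannot be paired with merely time-integrated $L^1$ control of $v$. The weighted tests with $(1+u)^{-\alpha}$ are never carried out, and nothing supports ``$q$ above the critical threshold.'' Also, $m>3-\frac4n$ plays no role at this stage; it enters only in the $u$-estimate.

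\textbf{Second, the case $n\ge5$.} Use the $q<\frac n{n-2}$ that is actually available. Combining $\int_\Omega u^{p+1}w\le\|u\|_{L^{(p+1)q'}}^{p+1}\|w\|_{L^q}$ with Lemma~\ref{lem:GN}(i) requires $(p+1)q'\le\frac{n(p+m-1)}{n-2}$. Since $q'>\frac n2$, this forces $(n-4)p<2m-n$. For $n=4$ this is just $m>2$. For $n\ge5$ it fails for all large $p$, and for every $p>0$ once $1+\frac n4<m\le\frac n2$. So your alternating ``raise $p$, then $q$'' scheme has no starting point. Moreover, where the estimate applies it yields $m>2-\frac2n+\frac1q$, not $m>1+\frac n4$; the formula with ``$\ldots$'' is not a derivation.

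The missing idea is that of Proposition~\ref{prop-u-n_>=4}:
\begin{enumerate}
\item[(a)] Put $p\sigma=\frac{n(p+m-1)}{n-2}$ exactly at the Sobolev endpoint. This produces $\|w\|_{L^\beta}$ with $\beta>\frac n{n-2}$, beyond the a priori range, which is interpolated as $C\|w\|_{L^\gamma}^\theta$ with $\gamma>\frac n2$.
\item[(b)] Pass to $e^{s-t}$-weighted time integrals and apply H\"older in time with $k=\frac{p+m-1}{m-2}$.
\item[(c)] Bound $\int_0^te^{s-t}\|w\|_{L^\gamma}^{\theta k}\,ds$ by $\int_0^te^{s-t}\|v\|_{L^\rho}^{\alpha\theta k}\,ds$, using Lemma~\ref{lem:GN}(ii) and weighted maximal regularity for the $w$-equation.
\item[(d)] Use $\|v\|_{L^\rho}\le C\|uw\|_{L^{\sigma'}}\le C\|u\|_{L^{p\sigma}}\|w\|_{L^\beta}$ and absorb the $w$-factor.
\end{enumerate}
The loop closes when $\alpha\theta k>1$ and $\alpha\theta<1-\frac1{m-1}$ for large $p$. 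The latter reduces to $p_0>\frac n{4m-6}$, which is compatible with $p_0<\frac n{n-2}$ exactly when $m>1+\frac n4$.
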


\begin{thm}\label{main_theorem_kappa=1}
Let $\kappa=1$ and $n\ge4$. Assume \eqref{eq:basic_assumptions} and
\eqref{eq1.3}, and suppose that $m>1+\frac n4$. Then the solution of
\eqref{sys} is global
and there exists $C>0$ such that
$$
 \|u(\cdot,t)\|_{L^\infty(\Omega)}
 +\|v(\cdot,t)\|_{W^{1,\infty}(\Omega)}
 +\|w(\cdot,t)\|_{L^\infty(\Omega)}
 \le C
 \qquad\text{for all }t>0.
$$
\end{thm}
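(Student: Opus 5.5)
We follow the standard route: local existence plus a blow-up criterion, then a priori $L^p$ bounds fed into Moser iteration. As in the treatment of Theorem \ref{main_theorem_kappa=0}, we take from standard fixed-point arguments a local classical solution on $(0,T_{\max})$ with the extensibility property: if $T_{\max}<\infty$, then $\|u(\cdot,t)\|_{L^\infty}+\|v(\cdot,t)\|_{W^{1,\infty}}+\|w(\cdot,t)\|_{L^\infty}\to\infty$. It therefore suffices to bound $\|u(\cdot,t)\|_{L^p}$ uniformly in time for some large $p$. After that, parabolic regularity for the $v$- and $w$-equations gives bounds on $\nabla v$ and $w$ in $L^\infty$. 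A Moser--Alikakos iteration on the $u$-equation, using the diffusion bound $D(u)\ge k_Du^{m-1}$, then yields $\|u\|_{L^\infty}\le C$.

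\textbf{Basic estimates.} Adding the three equations gives
\[
\frac{d}{dt}\int_\Omega (u+v+w)\le C-\int_\Omega (u+v+w)\cdot c,
\]
with $v$ taken from the $w$-equation with a suitable weight. This yields uniform $L^1$ bounds for $u,v,w$ and a space--time bound
\[
\int_t^{t+1}\int_\Omega uw\le C.
\]
Semigroup $L^1\to L^q$ smoothing in the $v$-equation (heat semigroup with a source bounded in $L^1_tL^1_x$) gives $\|v(\cdot,t)\|_{L^q}\le C$ for all $q<\frac n{n-2}$. The $w$-equation then upgrades this to $w\in L^q$ for $q<\frac n{n-4}$ when $n\ge5$, and to every $q<\infty$ when $n=4$.

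\textbf{Coupled energy functional.} For $p$ large we consider
\[
y(t)=\int_\Omega u^p+\int_\Omega|\nabla v|^{2r}+\int_\Omega w^{s}.
\]
The $u$-part yields
\[
\frac{d}{dt}\int_\Omega u^p+c\int_\Omega|\nabla u^{\frac{m+p-1}2}|^2\le C\int_\Omega u^{p+1-m}|\nabla v|^2+C.
\]
The term $-\int u^pw$ is discarded because it is nonpositive. The $\nabla v$-part gives
\[
\frac{d}{dt}\int_\Omega|\nabla v|^{2r}+c\int_\Omega\bigl|\nabla|\nabla v|^r\bigr|^2\le C\int_\Omega(uw)^2|\nabla v|^{2r-2}+\dots
\]
We keep the product $uw$ together rather than splitting it early. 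Following the abstract, this is where the product structure pays off: $(uw)^2$ is estimated by H\"older using $u$ in an $L^{p+m-1}$-type norm, controlled via Gagliardo--Nirenberg by the $u$-dissipation, and $w$ in a fixed $L^{q}$ with $q$ close to $\frac n{n-4}$. The $w$-part is handled with $\Delta w$ via maximal regularity, or by an $\int w^s$ estimate whose source $v\,w^{s-1}$ is controlled by $|\nabla v|$-dissipation and Sobolev embedding.

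\textbf{Main obstacle.} The key step is the interpolation showing that all right-hand sides can be absorbed by
\[
\int_\Omega|\nabla u^{\frac{m+p-1}2}|^2+\int_\Omega\bigl|\nabla|\nabla v|^r\bigr|^2,
\]
plus lower-order terms, through Young's inequality. Concretely, one must check that the total Gagliardo--Nirenberg exponents are $<2$, using $\|u\|_{L^1}$, $\|\nabla v\|_{L^{q'}}$ (with $q'<\frac n{n-1}$) and $\|w\|_{L^q}$ as base norms. The resulting scaling inequality should reduce to $m-1>\frac n4$. The heuristic is that $w$ carries two derivatives of regularity gain over the $L^1$ source, so effectively $\frac n{n-4}$ replaces the Keller--Segel threshold $\frac n{n-2}$. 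We would first fix $r$ proportional to $p$ (say $r\approx p/(m-1)$) and $p\to\infty$, then verify the exponent condition. If it fails at the borderline, we would instead estimate $\int u^{p+1-m}|\nabla v|^2$ with $\nabla v$ bounded through $v$'s maximal Sobolev regularity in terms of $uw$ in $L^{p'}$. The resulting inequality $y'+y\le C$ gives uniform bounds, which completes the argument as described above.
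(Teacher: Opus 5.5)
\paragraph{The gap.} Your mass bounds are fine: $\sup_t\int_\Omega(u+v+w)\le C$, together with only a space--time bound $\int_t^{t+1}\int_\Omega uw\le C$ for the source of the $v$-equation. The next step fails. The $L^1$--$L^q$ smoothing estimate needs $\sup_t\|f(\cdot,t)\|_{L^1}$; compare Lemma~\ref{lem-regularity}(i). If $f$ is controlled only in $L^1$ in time, the singular kernels $(t-s)^{-\frac n2(1-\frac1q)}$ (for $v$) and $(t-s)^{-\frac12-\frac n2(1-\frac1q)}$ (for $\nabla v$) are integrated against a merely integrable function of $s$. Sources concentrating near a space--time point respect all the $L^1$ information you have, including $\sup_t\|v\|_{L^1}\le C$. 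Yet for them no uniform bound holds on $\|v(\cdot,t)\|_{L^q}$ for $q>1$, nor on $\|\nabla v(\cdot,t)\|_{L^{q'}}$ for any $q'\ge1$.

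The claim $w\in L^\infty_tL^q$ for $q<\frac n{n-4}$ fails for the same reason. For a point source at $(t_0,x_0)$ one finds $w=(t-t_0)e^{-(t-t_0)}G(t-t_0,\cdot-x_0)$, where $G$ is the heat kernel. Hence $\|w(\cdot,t)\|_{L^q}\sim(t-t_0)^{1-\frac n2(1-\frac1q)}$, which stays bounded only for $q\le\frac n{n-2}$. The only pointwise-in-time information available is therefore $w\in L^q$ for $q<\frac n{n-2}$, obtained from $\sup_t\|v\|_{L^1}$; this is Lemma~\ref{lp_w_kappa1}.

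So the base norms $\|\nabla v\|_{L^{q'}}$ and $\|w\|_{L^q}$ with $q\approx\frac n{n-4}$ are not available. You are in effect treating $uw$ like a Keller--Segel source bounded in $L^\infty_tL^1_x$, which is exactly the difficulty of this system. Consequently, the scaling count that ``should reduce to $m-1>\frac n4$'' has no foundation, and you never actually carry it out.

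\paragraph{The missing idea.} The extra smoothing carried by $w$ can be used only in a time-averaged sense, which is what the space--time control permits. Your fallback of maximal Sobolev regularity for $v$ points the right way. On its own, though, it leaves a factor $\|w\|_{L^\beta}$ with $\beta$ close to $\frac n2$, and this exceeds the available exponent $p_0$ for large $p$.

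The paper closes this loop entirely with $e^{s-t}$-weighted time integrals:
\begin{itemize}
\item It bounds $\int_0^te^{s-t}\int_\Omega u^p|\Delta v|$ by H\"older in space--time and Lemma~\ref{max_sobolev_regularity} applied to the $v$-equation.
\item It keeps the product $\|uw\|_{L^{\sigma'}}\le\|u\|_{L^{p\sigma}}\|w\|_{L^\beta}$ intact.
\item It interpolates $w$ between $L^{p_0}$, with $p_0<\frac n{n-2}$, and $L^\gamma$, with $\gamma>\frac n2$.
\item It controls $\|w\|_{L^\gamma}$ by $\|\Delta w\|_{L^\rho}$ via Lemma~\ref{lem:GN}(ii).
\item It applies maximal regularity to the $w$-equation, then uses $\|v\|_{L^\rho}\le C\|\Delta v\|_{L^{\sigma'}}+C$, applies maximal regularity to the $v$-equation once more, and absorbs the resulting $w$-term.
\end{itemize}
The closing conditions $\alpha\theta k>1$ and $\alpha\theta<1-\frac1{m-1}$ reduce, for large $p$, to $p_0>\frac n{4m-6}$. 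This is compatible with $p_0<\frac n{n-2}$ precisely when $m>1+\frac n4$.

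A coupled functional containing $\int_\Omega|\nabla v|^{2r}$ might also be made to work. That would require replacing your pointwise-in-time base norms by weighted space--time estimates of this kind and then actually verifying the exponents.
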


The sufficient conditions obtained here are compared with those of
Wang and Wang \cite{wang2025} and Zheng and Wang \cite{zheng2026} in
Tables~\ref{tab:boundedness-kappa0} and~\ref{tab:boundedness-kappa1}, where
{the parabolic--elliptic--parabolic and fully parabolic cases
are listed separately.}

\begin{table}[H]
 \centering
 \small
 \renewcommand{\arraystretch}{1.25}
 \begin{tabular}{c|c|c|c}
  \hline
  Dimension & Wang--Wang~\cite{wang2025}
  & Zheng--Wang~\cite{zheng2026} & Present work \\
  \hline
  $n=2$ & $m>\frac32$ & $m>\frac32$ & $m>1$ \\
  \hline
  $n=3$ & $m>\frac{17}{6}$ & $m>\frac{25}{12}$ & $m>\frac53$ \\
  \hline
  $n\ge4$ & $m>2+\frac n2-\frac2n$
  & $m>2+\frac n4-\frac2n$ & $m>1+\frac n4$ \\
  \hline
 \end{tabular}
 \caption{Comparison of sufficient conditions for global boundedness in the
 {parabolic--elliptic--parabolic case} $\kappa=0$ in the
 dimensions covered by Theorem~\ref{main_theorem_kappa=0}.}
 \label{tab:boundedness-kappa0}
\end{table}

\begin{table}[H]
 \centering
 \small
 \renewcommand{\arraystretch}{1.25}
 \begin{tabular}{c|c|c|c}
  \hline
  Dimension & Wang--Wang~\cite{wang2025}
  & Zheng--Wang~\cite{zheng2026} & Present work \\
  \hline
  $n=4$ & $m>\frac52$ & $m>2$ & $m>2$ \\
  \hline
  $n\ge5$ & $m>1+\frac n2-\frac2n$
  & $m>\frac32+\frac n4-\frac2n$ & $m>1+\frac n4$ \\
  \hline
 \end{tabular}
 \caption{Comparison of sufficient conditions for global boundedness in the fully
 parabolic case $\kappa=1$ in the dimensions covered by Theorem~\ref{main_theorem_kappa=1}.}
 \label{tab:boundedness-kappa1}
\end{table}

{We first consider the parabolic--elliptic--parabolic case
$\kappa=0$.} As shown in
Table~\ref{tab:boundedness-kappa0}, Theorem~\ref{main_theorem_kappa=0}
improves the previously known boundedness criteria in every dimension
$n\ge2$. Most notably, in two dimensions the condition $m>1$ is sharp within
the class of power-law diffusions, in view of the available radial blow-up
results on balls:
our result guarantees global boundedness for every $m>1$, whereas blow-up may
occur at $m=1$ \cite{TaoWinkler2021Hotspots} and in the sublinear regime
$m<1$ \cite{wang2025}. In dimension $n=3$, the condition $m>\frac53$ also
improves the previous thresholds, while for $n\ge4$ the sufficient condition
is reduced to $m>1+\frac n4$.

We next turn to the fully parabolic case $\kappa=1$. As indicated in
Table~\ref{tab:boundedness-kappa1}, for $n=4$ the condition
$m>1+\frac n4=2$ coincides with the best previously known criterion of
Zheng and Wang \cite{zheng2026}, while improving that of Wang and Wang
\cite{wang2025}. For every $n\ge5$, the condition $m>1+\frac n4$ improves
both earlier criteria. In particular, our results yield the common sufficient condition
\[
 m>1+\frac n4
\]
{for both the parabolic--elliptic--parabolic and fully
parabolic systems when $n\ge4$.}

The boundedness proof follows the standard bootstrap strategy: we first
derive a uniform low-order $L^q$-estimate for $w$ and then use it to obtain
uniform $L^p$-bounds for $u$ for every finite $p>1$. The remaining
$L^\infty$-bounds and global extension follow from standard iteration,
regularity, and extensibility arguments. The improvement in the present work
lies in how the first two stages are carried out. A weighted space--time
argument recovers the required control of $w$ in the
{parabolic--elliptic--parabolic case},
while preserving the product structure of $uw$ in the subsequent
$L^p$-estimates allows the chemotactic term to be absorbed more efficiently
by the nonlinear diffusion. Together, these sequential refinements improve
the exponent balance and yield sharper sufficient conditions. We now explain
the two stages in detail.

We first establish a uniform low-order $L^q$-estimate for $w$, valid for
every $q$ satisfying
\[
 1\le q<\infty \quad\text{if } n=2,
 \qquad
 1\le q<\frac{n}{n-2}\quad\text{if }n\ge3.
\]
This estimate provides the starting point for controlling the nonlinear
source $uw$ and subsequently deriving higher-order bounds for $u$. The basic
mass estimate takes the form
\begin{equation}\label{eq:mass_intro}
 \int_\Omega u(\cdot,t)
 +\kappa\int_\Omega v(\cdot,t)
 +\int_\Omega w(\cdot,t)\le C.
\end{equation}
For $\kappa=1$, the pointwise-in-time $L^1$-bound for $v$ furnished by
\eqref{eq:mass_intro}, together with the standard $L^1$--$L^q$ smoothing
estimate for the $w$-equation, immediately gives the required bound for $w$.
When $\kappa=0$, however, the term involving $v$ is absent from
\eqref{eq:mass_intro}, so this direct argument is unavailable. We instead use
the integral identity for the $u$-equation to obtain weighted-in-time control
of the nonlinear source $uw$. The Brezis--Strauss elliptic estimate
\cite{BSeq} transfers this information to $v$, and the $w$-equation then
converts it into the desired pointwise $L^q$-bound for $w$. Thus, the same
range of low-order estimates is recovered despite the absence of pointwise
$L^1$-control of $v$, providing the starting point for the subsequent
bootstrap in the {parabolic--elliptic--parabolic case}.

We next turn to the second estimate, namely the derivation of uniform
$L^p$-bounds for $u$ for arbitrary finite $p>1$. After testing the
$u$-equation by $u^{p-1}$, the crucial task is to control the term
\[
 \int_\Omega u^p|\Delta v|,
\]
in a form that can be absorbed by the nonlinear diffusion. This is
complicated by the fact that, unlike the usual linear signal source $u$ in
classical Keller--Segel systems, the source in the $v$-equation is the
nonlinear product $uw$. When $\kappa=0$, the elliptic equation for $v$ allows
the chemotactic term to be estimated pointwise in time, whereas when
$\kappa=1$, it is treated in weighted time integrals through maximal Sobolev
regularity. In both cases, we retain the product structure of $uw$ while
choosing the H\"older and interpolation exponents, and apply Young's
inequality only after the relevant factors have been combined. This preserves
the exponent balance of the product and allows the chemotactic term to be
absorbed more efficiently by the nonlinear diffusion. Together with the
low-order estimate for $w$, this yields the sufficient conditions stated in
Theorems~\ref{main_theorem_kappa=0} and
\ref{main_theorem_kappa=1}.

The remainder of the paper is organized as follows. Section 2 collects the
maximal-regularity, semigroup, interpolation, and elliptic estimates used
below. Section 3 treats the parabolic--elliptic--parabolic case $\kappa=0$,
and Section 4 considers the fully parabolic case $\kappa=1$.

\section{Preliminaries}

First of all, we present the known local solvability,
extensibility property and $L^1$ estimates used throughout the paper for the convenience of readers.
See \cite[Lemma 1.1]{wang2025} for example.

\begin{lem}[Local existence and extensibility]\label{global_bounded_lemma}
Let $\kappa\in\{0,1\}$ and $n\ge2$. Assume \eqref{eq:basic_assumptions}, and
suppose that the initial data satisfy \eqref{eq1.2} when $\kappa=0$ and
\eqref{eq1.3} when $\kappa=1$. Then there exist
$T_{\max}\in(0,\infty]$ and a nonnegative classical solution $(u,v,w)$ of
\eqref{sys} on $\Omega\times(0,T_{\max})$ such that
$$
 u,w\in C^0\bigl(\overline\Omega\times[0,T_{\max})\bigr)
 \cap C^{2,1}\bigl(\overline\Omega\times(0,T_{\max})\bigr),
$$
and
$$
 v\in
 \begin{cases}
 C^{2,0}\bigl(\overline\Omega\times(0,T_{\max})\bigr),&\kappa=0,\\
 C^0\bigl(\overline\Omega\times[0,T_{\max})\bigr)
 \cap C^{2,1}\bigl(\overline\Omega\times(0,T_{\max})\bigr),&\kappa=1.
 \end{cases}
$$
Moreover, $u,v,w>0$ in $\overline\Omega\times(0,T_{\max})$. If
$T_{\max}<\infty$, then
$$
 \limsup_{t\nearrow T_{\max}}
 \left(
 \|u(\cdot,t)\|_{L^\infty(\Omega)}
 +\|w(\cdot,t)\|_{L^\infty(\Omega)}
 \right)=\infty.
$$
\end{lem}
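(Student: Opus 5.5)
We will prove the lemma by a contraction-mapping argument on a small time interval, followed by continuation, positivity via the maximum principle, and the extensibility criterion obtained by contradiction. Fix $R>0$ larger than $\|u_0\|_{L^\infty}+\|w_0\|_{L^\infty}+1$. For small $T>0$, consider the closed convex set
\[
 X_T=\bigl\{(\bar u,\bar w)\in C^0(\overline\Omega\times[0,T])^2:\ \|\bar u\|_{L^\infty}+\|\bar w\|_{L^\infty}\le R\bigr\}.
\]

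Given $(\bar u,\bar w)\in X_T$, the remaining components are constructed as follows. When $\kappa=0$, we define $v$ as the solution of the Neumann problem $-\Delta v+v=\bar u_+\bar w_+$. Elliptic $W^{2,p}$ theory gives $\|v(\cdot,t)\|_{W^{2,p}}\le C(p)R^2$ for every $p<\infty$, hence a uniform $C^{1,\theta}$ bound. When $\kappa=1$, we solve the heat-type problem $v_t=\Delta v-v+\bar u_+\bar w_+$ and use maximal Sobolev regularity. Next, $w$ is obtained from $w_t=\Delta w-w+v$ by the Neumann heat semigroup, and $u$ from the linear problem
\[
 u_t=\nabla\cdot\bigl(D(\bar u_+)\nabla u\bigr)-\chi\nabla\cdot(u\nabla v)-u-u\bar w_++\varphi .
\]
This problem is uniformly parabolic on $X_T$, since $D>0$ is continuous on $[0,R]$ and therefore bounded below there. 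The map $\Phi(\bar u,\bar w)=(u,w)$ sends $X_T$ into itself for $T$ small. For $w$ this follows from the semigroup bound $\|w(t)-e^{t(\Delta-1)}w_0\|_{L^\infty}\le CTR^2$. For $u$ it follows from standard $L^\infty$ bounds for linear parabolic equations with bounded drift $\nabla v$, because $u_0\in W^{2,\infty}$. H\"older estimates of Ladyzhenskaya--Solonnikov--Ural'tseva type (or Porzio--Vespri) show that $\Phi$ is compact, so Schauder's fixed point theorem yields a fixed point. Uniqueness comes from a Gronwall-type $L^2$ estimate for the difference of two solutions.

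We then bootstrap the regularity of the fixed point. The H\"older continuity of $u$ and $w$ gives $v\in C^{2+\theta,\cdot}$ in $x$ by Schauder theory (for $\kappa=1$, $v\in C^{2+\theta,1+\theta/2}$). This yields $w\in C^{2,1}$. Rewriting the $u$-equation in nondivergence form with $D\in C^2$ and applying parabolic Schauder estimates gives $u\in C^{2,1}(\overline\Omega\times(0,T))$.

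Positivity follows from the comparison principle. The function $0$ is a subsolution of the $u$-equation because $\varphi\ge0$, and the strong maximum principle with $u_0\not\equiv0$ gives $u>0$ for $t>0$. Then $uw\ge0$ yields $v\ge0$, hence $w\ge0$, and the strong maximum principle together with $w_0\not\equiv0$ gives $w>0$. Finally $uw>0$, so $v>0$ by the elliptic (respectively parabolic) strong maximum principle. This justifies dropping the positive parts in the construction.

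Continuation and the extensibility criterion come from the fact that the existence time $T$ depends only on $R$, i.e.\ on a bound for $\|u(\cdot,t_0)\|_{L^\infty}+\|w(\cdot,t_0)\|_{L^\infty}$. For $\kappa=1$ it also depends on a bound for $v(\cdot,t_0)$; this is harmless because $\|v\|_{L^\infty}$ is controlled on $(0,T_{\max})$ by the bounds on $u$ and $w$ through the variation-of-constants formula. We restart from $t_0$ close to $T_{\max}$ and take $T_{\max}$ maximal. If $T_{\max}<\infty$ and the $\limsup$ in the lemma were finite, uniform restart times would extend the solution beyond $T_{\max}$, a contradiction.

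The main technical obstacle is restarting at $t_0>0$, where the data are only as regular as the solution, not $W^{2,\infty}$. We handle this by using H\"older and $W^{1,\infty}$ estimates valid for $t\ge t_0/2$, since the argument needs only $L^\infty$ (or $C^\theta$) initial data.
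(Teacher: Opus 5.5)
The paper does not prove this lemma; it defers to \cite[Lemma~1.1]{wang2025}. Your argument is the standard construction behind such citations and is correct in outline: Schauder's fixed point theorem on $X_T$ with compactness from H\"older estimates, a Schauder-type regularity bootstrap, positivity by comparison and the strong maximum principle, and continuation via a uniform restart time.

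Two details need more care. First, $v\ge0$ should be read off directly from the nonnegative source $u_+w_+$ of your constructed problem. As written, ``$uw\ge0$ yields $v\ge0$, hence $w\ge0$'' presupposes $w\ge0$. Second, the restart time is governed not by $R$ alone but by higher norms of the data at $t_0$. Your self-map step for $u$ used $u_0\in W^{2,\infty}$, and for $\kappa=1$ the drift bound needs control of $\nabla v(\cdot,t_0)$. This is harmless: once $\|u\|_{L^\infty}+\|w\|_{L^\infty}$ is bounded on $(0,T_{\max})$, interior-in-time regularity bounds these norms uniformly on $[t_0/2,T_{\max})$, as your final paragraph indicates.
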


The following lemma provides uniform mass bounds for the solutions of
\eqref{sys} in both cases $\kappa=0$ and $\kappa=1$.

\begin{lem}[Mass bounds for $\kappa=0,1$]\label{lem-mass}

Let $\kappa\in\{0,1\}$ and $n\ge2$. Then there exists $C>0$ such that
$$
 \int_\Omega u(x,t)\,dx
 +\kappa\int_\Omega v(x,t)\,dx
 +\int_\Omega w(x,t)\,dx
 \le C
 \qquad\text{for all }t\in(0,T_{\max}).
$$
\end{lem}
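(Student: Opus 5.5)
We integrate the three equations of \eqref{sys} over $\Omega$ and use the homogeneous Neumann boundary conditions, so that the diffusion and taxis terms drop out. The aim is to combine the resulting ordinary differential inequalities so that the infection term $\int_\Omega uw$ cancels. Set $y_u(t)=\int_\Omega u$, $y_v(t)=\int_\Omega v$ and $y_w(t)=\int_\Omega w$. For $t\in(0,T_{\max})$ we obtain
\[
 y_u'=-y_u-\int_\Omega uw+\int_\Omega\varphi,\qquad
 \kappa y_v'=-y_v+\int_\Omega uw,\qquad
 y_w'=-y_w+y_v .
\]
Here $\int_\Omega\varphi\le\varphi_*|\Omega|$ by \eqref{eq:basic_assumptions}, and $u,v,w>0$ by Lemma~\ref{global_bounded_lemma}.

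\textbf{Case $\kappa=1$.} Adding the first two identities cancels $\int_\Omega uw$ and gives $(y_u+y_v)'+(y_u+y_v)\le\varphi_*|\Omega|$. An ODE comparison then yields
\[
 y_u+y_v\le\max\Bigl\{\int_\Omega u_0+\int_\Omega v_0,\ \varphi_*|\Omega|\Bigr\}=:C_1 .
\]
Inserting $y_v\le C_1$ into the $w$-identity gives $y_w'+y_w\le C_1$, so $y_w\le\max\{\int_\Omega w_0,C_1\}$. If one prefers a single functional, one can instead consider $y=y_u+y_v+\frac12 y_w$, which satisfies $y'+\frac12 y\le\varphi_*|\Omega|$ after absorbing $\frac12 y_v$ into $-y_v$.

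\textbf{Case $\kappa=0$.} The second identity becomes $y_v=\int_\Omega uw$, so there is no time derivative of $v$ to add, and we must use the $w$-equation. We have
\[
 y_w'=-y_w+\int_\Omega uw,\qquad y_u'=-y_u-\int_\Omega uw+\int_\Omega\varphi .
\]
Adding these cancels the source again and gives $(y_u+y_w)'+(y_u+y_w)\le\varphi_*|\Omega|$. Hence
\[
 y_u+y_w\le\max\Bigl\{\int_\Omega u_0+\int_\Omega w_0,\ \varphi_*|\Omega|\Bigr\}.
\]
This is exactly the claimed bound, since the factor $\kappa$ removes $v$ from the statement.

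The only delicate point is regularity at $t=0$, because $v$ is merely $C^{2,0}$ when $\kappa=0$ and the identities hold for $t>0$. To handle this, we integrate the differential inequality on $(\tau,t)$ and let $\tau\searrow0$. This is justified by the continuity of $u$ and $w$ (and of $v$ when $\kappa=1$) on $\overline\Omega\times[0,T_{\max})$, which Lemma~\ref{global_bounded_lemma} guarantees. The constant is then independent of $T_{\max}$.
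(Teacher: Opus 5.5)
Your argument is correct. It integrates the equations using the Neumann conditions, cancels $\int_\Omega uw$ through $y_u+y_v$ when $\kappa=1$ and through $\int_\Omega v=\int_\Omega uw$ together with $y_u+y_w$ when $\kappa=0$, and then applies ODE comparison on $(\tau,t)$ with $\tau\searrow0$, giving a bound independent of $T_{\max}$. The paper omits this proof and refers to \cite{wang2025}; your integrate-and-compare argument is the standard one behind it, and the paper reuses the same integrated $u$-identity in the proof of Lemma~\ref{lem:winf}.
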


\begin{proof}
The proof is omitted and can be found in \cite{wang2025}.
\end{proof}

Secondly, we prepare some known results on the maximal Sobolev regularity and the smoothing properties of the Neumann heat semigroup as well as the Gagliardo-Nirenberg inequality, which are useful in the proof of Theorems \ref{main_theorem_kappa=0} and \ref{main_theorem_kappa=1}.

\begin{lem}[Weighted maximal Sobolev regularity, \cite{MSR}]\label{max_sobolev_regularity}
Let $q,r\in(1,\infty)$, $\delta\in(0,1)$, and $T>0$. Suppose that $z$ solves
$$
\begin{cases}
 z_t=\Delta z-z+f,&x\in\Omega,\ t\in(0,T),\\
 \dfrac{\partial z}{\partial\nu}=0,&x\in\partial\Omega,\ t\in(0,T),\\
 z(\cdot,0)=z_0,&x\in\Omega,
\end{cases}
$$
where $f\in C(\overline\Omega\times(0,T))$ and
$z_0\in W^{2,\infty}(\Omega)$ satisfies the Neumann compatibility condition.
Then there exists $C=C(q,r,\delta,\Omega)>0$ such that, for every $t\in(0,T)$,
$$
 \int_0^t e^{-\delta r(t-s)}\|\Delta z(\cdot,s)\|_{L^q}^r\,ds
 \le C\int_0^t e^{-\delta r(t-s)}\|f(\cdot,s)\|_{L^q}^r\,ds
 +C\|z_0\|_{W^{2,q}}^r.
$$
\end{lem}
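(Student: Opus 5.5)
The plan is to derive the weighted estimate from the classical (unweighted) maximal Sobolev regularity for the Neumann problem by an exponential change of variables. Fix $t\in(0,T)$. Set $\tilde z(\cdot,s):=e^{\delta s}z(\cdot,s)$ for $s\in(0,t)$. Then $\tilde z$ solves
\begin{equation*}
 \tilde z_s=\Delta\tilde z-(1-\delta)\tilde z+e^{\delta s}f,\qquad
 \frac{\partial\tilde z}{\partial\nu}=0,\qquad \tilde z(\cdot,0)=z_0 .
\end{equation*}
Because $\delta\in(0,1)$, the operator $A:=-\Delta+(1-\delta)$ with Neumann boundary conditions is sectorial and invertible on $L^q(\Omega)$. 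Its spectrum lies in $[1-\delta,\infty)$, so the semigroup $e^{-sA}$ decays exponentially.

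For such an operator, maximal $L^r$-regularity holds on the whole half-line $(0,\infty)$. This is the key point: the constant is then uniform in the length of the time interval. The classical results of Hieber--Prüss and Giga--Sohr give
\begin{equation*}
 \int_0^t\|A\tilde z(\cdot,s)\|_{L^q}^r\,ds\le C\int_0^t e^{\delta rs}\|f(\cdot,s)\|_{L^q}^r\,ds
 +C\|z_0\|_{(L^q,D(A))_{1-1/r,r}}^r,
\end{equation*}
with $C=C(q,r,\delta,\Omega)$ independent of $t$ and $T$. I would handle the initial datum by splitting $\tilde z=\tilde z_1+\tilde z_2$. Here $\tilde z_2(s)=e^{-sA}z_0$, and $\tilde z_1$ has zero initial data. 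For the homogeneous part, exponential decay gives
\begin{equation*}
 \int_0^\infty\|Ae^{-sA}z_0\|_{L^q}^r\,ds\le \int_0^\infty e^{-r(1-\delta)s}\|Az_0\|_{L^q}^r\,ds\le C\|z_0\|_{W^{2,q}}^r .
\end{equation*}
This uses $z_0\in W^{2,\infty}\subset D(A)$, which holds thanks to the Neumann compatibility condition.

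Next I pass from $A\tilde z$ to $\Delta\tilde z$. I write $\Delta\tilde z=-A\tilde z+(1-\delta)\tilde z$ and use $\|\tilde z\|_{L^q}\le C\|A\tilde z\|_{L^q}$, which follows from invertibility of $A$. This gives
\begin{equation*}
 \int_0^t e^{\delta rs}\|\Delta z(\cdot,s)\|_{L^q}^r\,ds\le C\int_0^t e^{\delta rs}\|f(\cdot,s)\|_{L^q}^r\,ds+C\|z_0\|_{W^{2,q}}^r .
\end{equation*}
Multiplying by $e^{-\delta rt}$ then yields the claim, since $e^{-\delta rt}\le 1$ in front of the initial term.

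The main obstacle is ensuring that the maximal regularity constant does not depend on $t$ (or $T$). On a finite interval, the standard maximal regularity constant grows with the interval length unless the generator has negative spectral bound. This is precisely why the weight $\delta<1$ is needed: it leaves the spectral gap $1-\delta>0$. I would cite the half-line maximal regularity for exponentially stable analytic semigroups with bounded $H^\infty$-calculus, which holds for Neumann Laplacians on smooth bounded domains in $L^q$. A secondary technical point is regularity. The statement only assumes $f\in C(\overline\Omega\times(0,T))$, so if the right-hand side is infinite there is nothing to prove. Otherwise, the estimate follows first for smooth approximations of $f$ and then passes to the limit.
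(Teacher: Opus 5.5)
Your argument is correct, and it takes a different route from the paper because the paper gives no proof at all: it imports the weighted estimate from \cite{MSR} as a black box. You instead derive the weighted form from unweighted maximal regularity on the half-line, in three steps.
\begin{itemize}
\item The substitution $\tilde z=e^{\delta s}z$ converts the weight into the shifted operator $A=-\Delta+(1-\delta)$.
\item The inhomogeneous part is handled by half-line $L^r$--$L^q$ maximal regularity. After extending $e^{\delta s}f$ by zero beyond $t$, causality makes the constant independent of $t$ and $T$.
\item The free part is controlled by $\|Ae^{-sA}z_0\|_{L^q}\le e^{-(1-\delta)s}\|Az_0\|_{L^q}$, using contractivity of the Neumann heat semigroup on $L^q$. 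The passage from $A\tilde z$ to $\Delta\tilde z$ uses $\|A^{-1}\|\le(1-\delta)^{-1}$.
\end{itemize}
Compared with the bare citation, this buys a transparent reason why the constant depends only on $(q,r,\delta,\Omega)$. It also presents the lemma as a corollary of standard Hieber--Pr\"uss-type theory. The citation buys brevity.

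Two minor remarks. First, your statement that $\delta<1$ is ``precisely'' what is needed is overstated. The homogeneous half-line estimate for $\|\Delta\tilde z\|$ already holds for the Neumann Laplacian itself, since it generates a symmetric submarkovian semigroup. Moreover, $\int_0^\infty\|e^{s\Delta}\Delta z_0\|_{L^q}^r\,ds\le C\|\Delta z_0\|_{L^q}^r$ because $\Delta z_0$ has zero mean and the semigroup decays exponentially on mean-zero functions. So the case $\delta=1$ also holds. In your argument, $\delta<1$ is what makes $A$ invertible and hence gives the step $\|\tilde z\|_{L^q}\le C\|A\tilde z\|_{L^q}$.

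Second, rather than approximating $f$, it is cleaner to argue as follows. When the right-hand side is finite, $f\in L^r(0,t;L^q)$. The given solution, which is continuous up to $s=0$, is then a mild solution and therefore coincides with the maximal-regularity solution by uniqueness.
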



\begin{lem}[$L^1$--$L^q$ parabolic and elliptic smoothing]\label{lem-regularity}
Let $\Omega\subset\mathbb{R}^n$, $n\ge2$, be a bounded domain with smooth
boundary, and let
$$
 1\le q<\infty \quad\text{if } n=2,
 \qquad
 1\le q<\frac{n}{n-2}\quad\text{if }n\ge3.
$$
The following statements hold.

\begin{enumerate}
\item[\rm(i)]
Suppose that $z$ solves
$$
\begin{cases}
 z_t=\Delta z-z+f,
     &x\in\Omega,\quad t\in(0,T),\\
 \dfrac{\partial z}{\partial\nu}=0,
     &x\in\partial\Omega,\quad t\in(0,T),\\
 z(\cdot,0)=z_0,
     &x\in\Omega,
\end{cases}
$$
where $z_0\in L^q(\Omega)$ and
$$
 \sup_{t\in(0,T)}\|f(\cdot,t)\|_{L^1(\Omega)}\le M.
$$
Then there exists $C=C(q,\Omega)>0$, independent of $T$, such that
\begin{equation}\label{eq:parabolic_L1_Lq}
 \sup_{t\in(0,T)}\|z(\cdot,t)\|_{L^q(\Omega)}
 \le C\bigl(M+\|z_0\|_{L^q(\Omega)}\bigr).
\end{equation}

\item[\rm(ii)]
Suppose that
$$
 -\Delta z+z=f\quad\text{in }\Omega,
 \qquad
 \frac{\partial z}{\partial\nu}=0\quad\text{on }\partial\Omega,
$$
with $f\in L^1(\Omega)$. Then
\begin{equation}\label{eq:elliptic_L1_Lq}
 \|z\|_{L^q(\Omega)}
 \le C(q,\Omega)\|f\|_{L^1(\Omega)}.
\end{equation}
\end{enumerate}
\end{lem}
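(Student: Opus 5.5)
We prove both parts from the Neumann heat semigroup and the Neumann Green's function of $-\Delta+1$, using the singular-kernel bounds underlying them. For part (i) we write the Duhamel representation
\[
 z(\cdot,t)=e^{t(\Delta-1)}z_0+\int_0^t e^{(t-s)(\Delta-1)}f(\cdot,s)\,ds .
\]
The first term is handled by the $L^q$-contractivity of the Neumann heat semigroup, which gives $\|e^{t(\Delta-1)}z_0\|_{L^q}\le e^{-t}\|z_0\|_{L^q}\le\|z_0\|_{L^q}$. For the integral term we use the standard $L^1$--$L^q$ smoothing estimate
\[
 \|e^{\tau\Delta}\phi\|_{L^q}\le C_1\bigl(1+\tau^{-\frac n2(1-\frac1q)}\bigr)\|\phi\|_{L^1},
 \qquad \tau>0 .
\]
Multiplying by $e^{-\tau}$ and integrating in time then yields
\[
 \Bigl\|\int_0^t e^{(t-s)(\Delta-1)}f(\cdot,s)\,ds\Bigr\|_{L^q}
 \le C_1M\int_0^\infty\bigl(1+\tau^{-\frac n2(1-\frac1q)}\bigr)e^{-\tau}\,d\tau .
\]
The key point is that this integral is finite exactly when $\frac n2(1-\frac1q)<1$. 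For $n=2$ this holds for every finite $q$, and for $n\ge3$ it is equivalent to $q<\frac n{n-2}$, which is precisely the range in the statement. The exponential factor $e^{-\tau}$ coming from the $-z$ term removes any dependence on $T$, so we obtain \eqref{eq:parabolic_L1_Lq}.

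For part (ii) we use the resolvent identity
\[
 (1-\Delta)^{-1}=\int_0^\infty e^{-\tau}e^{\tau\Delta}\,d\tau .
\]
Applying it to $f$ gives $z=\int_0^\infty e^{-\tau}e^{\tau\Delta}f\,d\tau$. The same integral bound as in (i), now with $M$ replaced by $\|f\|_{L^1}$, gives \eqref{eq:elliptic_L1_Lq} directly. Alternatively, one could use the Green's function bound $|G(x,y)|\le C|x-y|^{2-n}$ (a logarithmic bound when $n=2$) together with Young's convolution inequality, since $|x|^{2-n}\in L^q$ locally iff $q<\frac n{n-2}$. Yet another route is the Brezis--Strauss duality argument \cite{BSeq}, which tests the equation against solutions of the adjoint problem with $L^{q'}$ data and uses $W^{2,q'}\hookrightarrow L^\infty$ for $q'>\frac n2$.

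A technical point is that $f$ is only assumed bounded in $L^1$ and not continuous, so the Duhamel formula should be read for mild solutions. This is harmless, since in all later applications $f$ is continuous. The only genuinely delicate point is the integrability of the kernel at $\tau=0$, and this is exactly what forces the sharp restriction on $q$. Beyond that, the argument is a standard combination of semigroup estimates, and in the paper it could also be justified by citation, for instance to Winkler's semigroup lemma and to \cite{BSeq}.
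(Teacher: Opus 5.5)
Your part (i) is the paper's argument: Duhamel's formula, the $L^1$--$L^q$ smoothing estimate for the Neumann heat semigroup, and the observation that $\int_0^\infty e^{-\tau}\bigl(1+\tau^{-\frac n2(1-\frac1q)}\bigr)\,d\tau<\infty$ exactly when $\frac n2(1-\frac1q)<1$.

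For part (ii) your route differs from the paper's. The paper cites the Brezis--Strauss estimate $\|z\|_{W^{1,r}}\le C_r\|f\|_{L^1}$ for $1\le r<\frac{n}{n-1}$. It then applies the Sobolev embedding $W^{1,r}\hookrightarrow L^{\frac{nr}{n-r}}$ with $r\nearrow\frac n{n-1}$. You instead write $z=(1-\Delta)^{-1}f=\int_0^\infty e^{-\tau}e^{\tau\Delta}f\,d\tau$ and reuse the kernel bound from (i).

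Your version is more economical. Both parts rest on a single semigroup estimate, and it shows that the restriction $q<\frac n{n-2}$ comes from the same singularity at $\tau=0$ in both cases. The cost is that you must identify the elliptic solution with the Laplace transform of the semigroup. This is immediate when $f$ is continuous, or lies in $L^p$ for some $p>1$; that is the only case used later, with $f=uw$. For genuinely $L^1$ data, you need to read the equation in a weak sense and use uniqueness of such solutions, for instance by duality.

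The paper's route gives more than is needed, namely a $W^{1,r}$ gradient bound, but relies on a citation for it. Your duality alternative is also correct and is perhaps the most self-contained: test against $-\Delta\psi+\psi=g\in L^{q'}$ and use $W^{2,q'}\hookrightarrow L^\infty$ for $q'>\frac n2$, which is equivalent to $q<\frac n{n-2}$.
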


\begin{proof}
For (i), the variation-of-constants formula and the standard
$L^1$--$L^q$ smoothing estimate for the Neumann heat semigroup give
$$
 \|z(\cdot,t)\|_{L^q}
 \le e^{-t}\|z_0\|_{L^q}
 +C\int_0^t
 e^{-(t-s)}
 \left(1+(t-s)^{-\frac n2(1-\frac1q)}\right)
 \|f(\cdot,s)\|_{L^1}\,ds.
$$
The restriction on $q$ ensures that
$$
 \frac n2\left(1-\frac1q\right)<1,
$$
so the time convolution is uniformly bounded and
\eqref{eq:parabolic_L1_Lq} follows.

For (ii), the Brezis--Strauss estimate
\cite[Lemma~23]{BSeq} yields
$$
 \|z\|_{W^{1,r}(\Omega)}
 \le C_r\|f\|_{L^1(\Omega)}
 \qquad\text{for every }1\le r<\frac{n}{n-1}.
$$
Choosing $r$ sufficiently close to $\frac{n}{n-1}$ and using the Sobolev
embedding theorem gives \eqref{eq:elliptic_L1_Lq}.
\end{proof}

{
We shall repeatedly use the following Gagliardo--Nirenberg inequalities in
the sequel. For completeness, we provide a brief justification together
with precise references.}

\begin{lem}[Gagliardo--Nirenberg inequalities]\label{lem:GN}
The following two forms will be used below.
\begin{enumerate}
\item[\rm(i)]
Let $f\ge0$, $f\in W^{1,2}(\Omega)$, and $0<q<r<\infty$. Suppose that
$r\le\frac{2n}{n-2}$ when $n\ge3$, with arbitrary finite $r$ when $n=2$,
and define
$$
 a=\frac{\frac1q-\frac1r}{\frac1q+\frac1n-\frac12}\in[0,1].
$$
Then
$$
 \|f\|_{L^r}
 \le C\|\nabla f\|_{L^2}^{a}\|f\|_{L^q}^{1-a}
 +C\|f\|_{L^q}.
$$

\item[\rm(ii)]
Let $p_0,\rho,\gamma>1$, let $f\in W^{2,\rho}(\Omega)$ satisfy
$\frac{\partial f}{\partial\nu}=0$ on $\partial\Omega$, and suppose that
there exists $\alpha\in(0,1)$ such that
$$
 \frac1\gamma
 =\alpha\left(\frac1\rho-\frac2n\right)
 +\frac{1-\alpha}{p_0}.
$$
Then
$$
 \|f\|_{L^\gamma}
 \le C\|\Delta f\|_{L^\rho}^{\alpha}
 \|f\|_{L^{p_0}}^{1-\alpha}+C\|f\|_{L^{p_0}}.
$$
\end{enumerate}
\end{lem}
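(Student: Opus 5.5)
Both inequalities in Lemma~\ref{lem:GN} are special cases of the classical Gagliardo--Nirenberg inequality on bounded smooth domains. My plan is to reduce each one to the standard statement and check that the exponents match. No genuinely new estimate is needed; the work is bookkeeping of parameters plus one elliptic-regularity step in (ii).

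For (i), I would invoke the bounded-domain Gagliardo--Nirenberg inequality in the form valid also for $0<q<1$ (for instance Li--Zhang, or Nirenberg's original form with the additive lower-order term):
\[
 \|f\|_{L^r}\le C\|\nabla f\|_{L^2}^{a}\|f\|_{L^q}^{1-a}+C\|f\|_{L^q}.
\]
The exponent $a$ is determined by the scaling relation
\[
 \frac1r=a\Bigl(\frac12-\frac1n\Bigr)+\frac{1-a}{q}.
\]
Solving for $a$ gives exactly $a=\frac{1/q-1/r}{1/q+1/n-1/2}$.

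It then remains to check that $a\in[0,1]$. The condition $r>q$ gives $a\ge0$. The condition $a\le1$ is equivalent to $\frac1r\ge\frac12-\frac1n$, which is precisely the hypothesis $r\le\frac{2n}{n-2}$ for $n\ge3$. For $n=2$ the denominator is $1/q$, so $a=1-q/r<1$ and any finite $r$ is admissible. The nonnegativity of $f$ is only relevant for the quasi-norm case $q<1$. There I would cite the extension of the inequality to $q\in(0,1)$, or alternatively apply the $q\ge1$ case to a power $f^{s}$ and rescale.

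For (ii), the plan is to use the Gagliardo--Nirenberg inequality with second derivatives,
\[
 \|f\|_{L^\gamma}\le C\|D^2f\|_{L^\rho}^{\alpha}\|f\|_{L^{p_0}}^{1-\alpha}+C\|f\|_{L^{p_0}},
\]
which holds under the stated relation $\frac1\gamma=\alpha(\frac1\rho-\frac2n)+\frac{1-\alpha}{p_0}$ with $\alpha\in(0,1)$. Note that $\alpha<1$ avoids the exceptional integer case.

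I would then replace $\|D^2 f\|_{L^\rho}$ by $\|\Delta f\|_{L^\rho}$. This is the one step needing care. By elliptic $L^\rho$-regularity for the Neumann problem, applied to $-\Delta f+f=-\Delta f+f$ with $\partial_\nu f=0$,
\[
 \|f\|_{W^{2,\rho}}\le C\|\Delta f\|_{L^\rho}+C\|f\|_{L^\rho}.
\]
The remaining term $\|f\|_{L^\rho}$ has to be absorbed. I would interpolate it between $\|f\|_{W^{2,\rho}}$ and $\|f\|_{L^{p_0}}$ (or $L^1$) and use Young's inequality, which yields
\[
 \|f\|_{W^{2,\rho}}\le C\|\Delta f\|_{L^\rho}+C\|f\|_{L^{p_0}}.
\]

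Substituting this bound and using $(x+y)^\alpha\le x^\alpha+y^\alpha$ gives the claimed inequality. The absorption step is the main (mild) obstacle, because $\rho$ may exceed $p_0$. It is handled by an interpolation of the lower-order norm through the same Gagliardo--Nirenberg family, with a small constant, followed by absorption into the left-hand side.
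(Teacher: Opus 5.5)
Your proposal is correct and follows the same route as the paper: (i) cites the bounded-domain Gagliardo--Nirenberg inequality in its $0<q<1$ version, and (ii) combines the second-order inequality with the Neumann estimate $\|f\|_{W^{2,\rho}}\le C(\|\Delta f\|_{L^\rho}+\|f\|_{L^{p_0}})$, which you justify by interpolation and absorption whereas the paper only asserts it. The one point the paper treats explicitly is the endpoint $r=\frac{2n}{n-2}$ (where $a=1$; it is later used with $q=\frac{2}{p+m-1}<1$), which it obtains from Sobolev embedding plus the subcritical case with target exponent $2$ and Young's inequality, so make sure your cited $q<1$ result covers $a=1$; also, your ``apply the $q\ge1$ case to a power $f^s$'' fallback does not work as stated, and the elementary reduction is instead to bound $\|f\|_{L^1}$ by H\"older between $L^q$ and $L^r$ and absorb.
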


{
\begin{proof}
For (i), the subcritical case follows from
\cite[Appendix, Proposition~A.1]{DalPassoGiacomelliShishkov2001} by taking
the derivative order to be one and the derivative exponent to be two;
in particular, that result allows the lower integrability exponent $q$ to
be less than one. When $n\ge3$ and $r=\frac{2n}{n-2}$, the Sobolev
embedding gives
$$
 \|f\|_{L^{\frac{2n}{n-2}}}
 \le C\bigl(\|\nabla f\|_{L^2}+\|f\|_{L^2}\bigr)
 \le C\|\nabla f\|_{L^2}+C\|f\|_{L^q}.
$$
Indeed, if $q<2$, the second inequality follows by applying the already
established subcritical estimate with target exponent $2$ and then Young's
inequality, whereas if $q\ge2$, it follows directly from the boundedness of
$\Omega$. This proves the critical endpoint. See also
\cite[Lemma~2.2]{Winkler2002Critical} for a direct treatment of subunit
integrability exponents.

For (ii), the standard second-order Gagliardo--Nirenberg interpolation
inequality \cite[Part~I, Theorem~10.1]{Friedman1969}, together with the
Neumann elliptic estimate
$$
 \|f\|_{W^{2,\rho}(\Omega)}
 \le C\bigl(\|\Delta f\|_{L^\rho(\Omega)}
       +\|f\|_{L^{p_0}(\Omega)}\bigr),
$$
immediately yields the asserted inequality.
\end{proof}}



\section[Global Boundedness for the Parabolic--Elliptic--Parabolic System (kappa=0)]{Parabolic--Elliptic--Parabolic System ($\kappa=0$)}
In this section, we prove Theorem~\ref{main_theorem_kappa=0} for the
parabolic--elliptic--parabolic system, corresponding to $\kappa=0$.
In this case, system~\eqref{sys} takes the form
\begin{numcases}{}
 u_t=\nabla\cdot(D(u)\nabla u)-\chi\nabla\cdot(u\nabla v)-u-uw+\varphi,
 & $x\in\Omega,\ t>0$,\nonumber\\
 0=\Delta v-v+uw,
 & $x\in\Omega,\ t>0$,\nonumber\\
 w_t=\Delta w-w+v,
 & $x\in\Omega,\ t>0$,\nonumber\\
 \frac{\partial u}{\partial\nu}
 =\frac{\partial v}{\partial\nu}
 =\frac{\partial w}{\partial\nu}=0,
 & $x\in\partial\Omega,\ t>0$,\nonumber\\
 u(x,0)=u_0(x),
 & $x\in\Omega$,\nonumber\\
 w(x,0)=w_0(x),\qquad x\in\Omega\label{sys-kappa-0}
\end{numcases}
The proof consists of two main steps. We first derive uniform
$L^p$-bounds for $w$ below the critical Sobolev exponent. We then
establish suitable $L^p$-bounds for $u$ by exploiting the diffusion
term and the elliptic equation for $v$. Since the interpolation
argument depends essentially on the spatial dimension, the estimate
for $u$ will be treated separately in three dimensional regimes.


We begin with the following uniform $L^p$-estimate for $w$:
$$
 1\le p<\frac{n}{n-2},
$$
where $\displaystyle\frac{n}{n-2}:=\infty$ when $n=2$.

\begin{lem}\label{lem:winf}
In the parabolic--elliptic--parabolic
system \eqref{sys-kappa-0}, suppose that the assumptions \eqref{eq:basic_assumptions} and
\eqref{eq1.2} are valid.
Then for $n\ge2$ and
$$
p\in\left[1,\frac{n}{n-2}\right),
$$
there exists $C(p)>0$ such that
$$
 \sup_{t\in(0,T_{\max})}\|w(\cdot,t)\|_{L^p(\Omega)}\le C(p).
$$
\end{lem}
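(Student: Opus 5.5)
Integrating the $u$-equation over $\Omega$ gives weighted-in-time control of $\int_\Omega uw$. The Brezis--Strauss estimate of Lemma~\ref{lem-regularity}(ii) transfers this control to $v$, and the variation-of-constants formula for $w$ then yields the pointwise bound.

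\emph{Step 1: weighted control of the source.} Integrating the first equation of \eqref{sys-kappa-0} over $\Omega$ and using the Neumann conditions, we obtain
$$
 \frac{d}{dt}\int_\Omega u+\int_\Omega u+\int_\Omega uw=\int_\Omega\varphi\le\varphi_*|\Omega|.
$$
Writing $y(t)=\int_\Omega u(\cdot,t)$ and $g(t)=\int_\Omega uw$, we multiply by $e^{s}$ and integrate over $(0,t)$. Since $y\ge0$, this gives
$$
 \int_0^t e^{-(t-s)}g(s)\,ds\le \int_\Omega u_0+\varphi_*|\Omega|=:M_0
 \qquad\text{for all }t\in(0,T_{\max}).
$$
By the same argument with the weight restricted to $(t-1,t)$, we also get $\int_{(t-1)_+}^t g\le eM_0$. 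This uniform local-in-time $L^1$ bound is what will be used in Step~3.

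\emph{Step 2: from $uw$ to $v$.} Fix $p\in[1,\frac n{n-2})$. Lemma~\ref{lem-regularity}(ii) applied to $-\Delta v+v=uw$ gives
$$
 \|v(\cdot,s)\|_{L^p}\le C_p\, g(s)\qquad\text{for each }s .
$$
Note that only a time-integrated bound on $g$ is available, not a pointwise one. For this reason we cannot pass the $L^p$ norm of $v$ through the heat smoothing via Lemma~\ref{lem-regularity}(i). Instead we use only the $L^1$ information,
$$
 \|v(\cdot,s)\|_{L^1}=\int_\Omega v=\int_\Omega uw=g(s),
$$
which follows by integrating the elliptic equation over $\Omega$.

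\emph{Step 3: the $w$-equation.} We write
$$
 w(t)=e^{t(\Delta-1)}w_0+\int_0^t e^{(t-s)(\Delta-1)}v(s)\,ds
$$
and apply the $L^1$--$L^p$ smoothing estimate. This yields
$$
 \|w(t)\|_{L^p}\le \|w_0\|_{L^p}+C\int_0^t e^{-(t-s)}\bigl(1+(t-s)^{-\theta}\bigr)g(s)\,ds,
 \qquad \theta=\tfrac n2\Bigl(1-\tfrac1p\Bigr)<1 .
$$

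The term with the factor $1$ is bounded by $M_0$ by Step~1. The singular term is the main obstacle, because a weighted $L^1_t$ bound on $g$ alone does not control a convolution with $(t-s)^{-\theta}$. Two remedies are available.
\begin{itemize}
\item[(a)] Obtain an additional bound $g\in L^{r}_{\rm loc,unif}$ in time with $r>1/(1-\theta)$, by testing the $u$-equation with a suitable function.
\item[(b)] Exploit the $v$-gain from Step~2 by splitting the semigroup estimate. From Step~2 we know $v(s)\in L^{p_1}$ with norm $\le Cg(s)$ for every $p_1<\frac n{n-2}$. Applying $L^{p_1}\to L^p$ smoothing then gives the exponent $\theta'=\frac n2(\frac1{p_1}-\frac1p)$.
\end{itemize}
Remedy (b) is what I would try first. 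Choosing $p_1$ close to $p$ makes $\theta'$ arbitrarily small, and we bound $v(s)$ in $L^{p_1}$ by $Cg(s)$. The convolution is then
$$
 \int_0^t e^{-(t-s)}\bigl(1+(t-s)^{-\theta'}\bigr)g(s)\,ds .
$$
A small singular exponent still needs $g\in L^{1+\epsilon}$ locally in time, so an extra integrability step remains. For it I would test the $u$-equation with $(1+u)^{-\alpha}$ or with $\ln(1+u)$, in order to get $\int_{t-1}^t\int_\Omega uw\ln(\cdots)$-type improvements. Alternatively, the term $\int_\Omega w^{p}$ could be closed by a Gronwall-type differential inequality using the mass bound on $w$ from Lemma~\ref{lem-mass}. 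The available bound $\int_{t-1}^tg\le C$ is only sufficient after this refinement, and this is where I expect the genuine difficulty to lie.
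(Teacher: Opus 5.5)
\textbf{There is a genuine gap.} Your Steps 1 and 2 are the right ingredients and coincide with the paper's. But the proposal never closes, and the obstacle you identify in Step 3 is one you created yourself.

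In Step 2 you already have $\|v(\cdot,s)\|_{L^p}\le C_p\,g(s)$ for the very exponent $p<\frac{n}{n-2}$ you want. You then discard this and send $v$ through the $L^1$--$L^p$ smoothing estimate. That choice is what produces the singular kernel $(t-s)^{-\theta}$. In remedy (b) you only let $p_1$ approach $p$, but nothing prevents you from taking $p_1=p$. The Neumann semigroup satisfies $\|e^{\tau(\Delta-1)}f\|_{L^p}\le e^{-\tau}\|f\|_{L^p}$, so $\theta'=0$ and there is no singularity at all.

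Equivalently, and this is what the paper does, test the $w$-equation with $pw^{p-1}$, drop the gradient term, and apply H\"older to $\int_\Omega vw^{p-1}$. This gives $\frac{d}{dt}\|w\|_{L^p}\le-\|w\|_{L^p}+\|v\|_{L^p}$. Either way,
\[
\|w(\cdot,t)\|_{L^p}\le e^{-t}\|w_0\|_{L^p}+\int_0^t e^{-(t-s)}\|v(\cdot,s)\|_{L^p}\,ds
\le \|w_0\|_{L^p}+C_p\int_0^t e^{-(t-s)}g(s)\,ds\le \|w_0\|_{L^p}+C_pM_0,
\]
which is the claim.

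So your assertion that a small singular exponent ``still needs $g\in L^{1+\epsilon}$'' is beside the point. Remedy (a) and the proposed extra integrability step (testing with $(1+u)^{-\alpha}$ or $\ln(1+u)$) are unnecessary. The weighted $L^1_t$ bound from Step 1 already suffices once you keep the $L^p$ information on $v$.

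Your final alternative, a differential inequality for $\int_\Omega w^p$, is in fact the right idea. However, its input is not the mass bound on $w$ from Lemma~\ref{lem-mass}; it is your own Step 2 estimate $\|v(\cdot,s)\|_{L^p}\le C_p g(s)$. Your remark that Lemma~\ref{lem-regularity}(i) does not apply, because $\sup_s g(s)$ is not controlled, is correct. The point is that no smoothing from $L^1$ is needed at all.

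As written, the proposal ends by declaring an unresolved ``genuine difficulty'' instead of supplying this final estimate, so it does not prove the lemma.
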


\begin{proof}
Integrating the first equation in \eqref{sys-kappa-0} over $\Omega$ gives
$$
 \frac{d}{dt}\int_\Omega u\,dx
 +\int_\Omega u\,dx
 +\int_\Omega uw\,dx
 =\int_\Omega\varphi\,dx.
$$
Multiplying this identity by $e^t$ and integrating over $(0,t)$,
we obtain
\begin{align*}
 \int_\Omega u(x,t)\,dx+ \int_0^t e^{s-t}\int_\Omega uw\,dx\,ds
 & =  e^{-t}\int_\Omega u_0\,dx
 +\int_0^t e^{s-t}\int_\Omega\varphi\,dx\,ds.
\end{align*}
Therefore, by \eqref{eq:basic_assumptions}, we have
\begin{equation}\label{eq:uw_weighted}
\int_0^t e^{s-t}\int_\Omega uw\,dx\,ds\le  \int_\Omega u_0\,dx+|\Omega|\varphi_*.
\end{equation}

For each fixed $s$, Lemma~\ref{lem-regularity}(ii), specifically
\eqref{eq:elliptic_L1_Lq}, applied to the equation for $v$ in
\eqref{sys-kappa-0} yields
$$
 \|v(\cdot,s)\|_{L^p}
 \le C\|u(\cdot,s)w(\cdot,s)\|_{L^1}\ \ \ \ \textrm{for}\ n\geq 2,\  \ 1\le p <\frac{n}{n-2}.
$$
Combining this estimate with \eqref{eq:uw_weighted}, we obtain for $0<t< T_{\max}$,
\begin{equation}\label{eq:v_weighted}
\int_0^t e^{s-t}\|v\|_{L^p}\,ds\le C.
\end{equation}

It remains to derive the corresponding estimate for $w$. The case
$p=1$ follows from Lemma~\ref{lem-mass}. Let therefore $p>1$. Testing the third
equation in \eqref{sys-kappa-0} by $pw^{p-1}$ and integrating by parts, we find for $0<t< T_{\max}$,
\begin{align*}
\frac{d}{dt}\int_\Omega w^p
 &=-p(p-1)\int_\Omega w^{p-2}|\nabla w|^2
 -p\int_\Omega w^p
 +p\int_\Omega vw^{p-1}\\
&\le -p\int_\Omega w^p  +p\|v\|_{L^p}\|w\|_{L^p}^{p-1}.
\end{align*}
Thus
$$
\frac{d}{dt} \|w\|_{L^p}\le  -\|w\|_{L^p}+\|v\|_{L^p}.
$$
Integrating this differential inequality and using \eqref{eq:v_weighted}, we obtain
 for $0<t< T_{\max}$,
$$
\|w\|_{L^p}\le e^{-t} \|w_0\|_{L^p}+\int_0^t e^{(s-t)} \|v\|_{L^p}\le C,
$$
provided that
$$
n\geq 2,\  \ 1< p <\frac{n}{n-2}.
$$
The proof is complete.
\end{proof}



We next derive the $L^p$-bounds for $u$ required in the proof of
Theorem~\ref{main_theorem_kappa=0}. The principal difficulty is to absorb the chemotactic
term by the diffusion term. Since the admissible interpolation
exponents depend on the spatial dimension, we divide the argument
into the following three cases.


\begin{prop}\label{prop-u-2}
In the parabolic--elliptic--parabolic
system \eqref{sys-kappa-0},
suppose that the assumptions in Lemma \ref{lem:winf} are valid,  and
$$n=2,\  \ m>1.
$$
Then, for every $p>1$, there exists
$C(p)>0$ such that
$$
 \|u(\cdot,t)\|_{L^p(\Omega)}\le C(p)
 \qquad\text{for all }t\in(0,T_{\max}).
$$
\end{prop}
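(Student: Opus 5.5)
We test the first equation of \eqref{sys-kappa-0} by $u^{p-1}$ for a fixed large $p>1$. Using $D(u)\ge k_Du^{m-1}$, integration by parts of the taxis term, and the elliptic equation $\Delta v=v-uw$, we obtain
\begin{align*}
\frac1p\frac{d}{dt}\int_\Omega u^p
+\frac{4k_D(p-1)}{(m+p-1)^2}\int_\Omega\bigl|\nabla u^{\frac{m+p-1}{2}}\bigr|^2
+\int_\Omega u^p
&\le \frac{\chi(p-1)}{p}\int_\Omega u^p\Delta v+\varphi_*\int_\Omega u^{p-1}\\
&\le \frac{\chi(p-1)}{p}\int_\Omega u^{p+1}w+\varphi_*\int_\Omega u^{p-1}.
\end{align*}
Here the favorable term $-\int_\Omega u^pv\le0$ is dropped, which is exactly where the product structure enters. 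The only dangerous term is therefore $\int_\Omega u^{p+1}w$. We keep it as a product and do not split it by Young's inequality prematurely. By H\"older's inequality,
$$
\int_\Omega u^{p+1}w\le \|u\|_{L^{(p+1)q'}}^{p+1}\|w\|_{L^q},
$$
where $q$ is large and $q'=q/(q-1)$. Lemma~\ref{lem:winf} with $n=2$ gives $\sup_t\|w\|_{L^q}\le C(q)$ for every finite $q$.

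Next we set $f=u^{\frac{m+p-1}{2}}$ and apply Lemma~\ref{lem:GN}(i) in $n=2$. We take target exponent $r=\frac{2(p+1)q'}{m+p-1}$ and lower exponent $\frac{2}{m+p-1}$, so that $\|f\|_{L^{2/(m+p-1)}}^{2/(m+p-1)}=\|u\|_{L^1}$ is bounded by Lemma~\ref{lem-mass}. This yields
$$
\|u\|_{L^{(p+1)q'}}^{p+1}=\|f\|_{L^r}^{\frac{2(p+1)}{m+p-1}}
\le C\|\nabla f\|_{L^2}^{\frac{2(p+1)a}{m+p-1}}+C.
$$
In two dimensions the GN exponent satisfies $a=\frac{\frac{m+p-1}{2}-\frac1r}{\frac{m+p-1}{2}}=1-\frac{1}{(p+1)q'}$. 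Hence the power of $\|\nabla f\|_{L^2}$ is
$$
\frac{2(p+1)a}{m+p-1}=\frac{2\bigl(p+1-\frac1{q'}\bigr)}{m+p-1}=\frac{2\bigl(p+\frac1q\bigr)}{m+p-1}.
$$
This is strictly less than $2$ exactly when $m>1+\frac1q$. Since $m>1$, we can choose $q$ large, which Lemma~\ref{lem:winf} permits for $n=2$. Young's inequality then absorbs the chemotactic term into the diffusion term, up to a constant.

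The term $\varphi_*\int_\Omega u^{p-1}$ is handled by Young's inequality against $\int_\Omega u^p$. This gives
$$
\frac{d}{dt}\int_\Omega u^p+\int_\Omega u^p\le C,
$$
and an ODE comparison yields the uniform bound on $\|u(\cdot,t)\|_{L^p}$ for all $t\in(0,T_{\max})$. Small values of $p$ then follow by H\"older's inequality.

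The main obstacle is the exponent bookkeeping in the GN step. We must check that $r$ lies in the admissible range (any finite $r$ is allowed when $n=2$) and that the lower exponent, which is below one, is permitted; Lemma~\ref{lem:GN}(i) explicitly allows $q<1$. Most importantly, we must verify that retaining $\int_\Omega u^{p+1}w$ as a single product, rather than bounding $\int_\Omega u^p|\Delta v|$ through elliptic $L^\rho$ estimates of $\Delta v$, is what brings the threshold down to $m>1$. If the sign argument for $-\int_\Omega u^pv$ were unavailable, we would instead bound $\|\Delta v\|_{L^{s}}\le C\|uw\|_{L^{s}}$ elliptically. That route gives the same exponent count through a slightly longer H\"older chain.
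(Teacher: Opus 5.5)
Your argument is correct and reaches exactly the paper's condition, with your $q$ playing the role of the paper's $p_0$. The one genuine difference is how the taxis term is turned into a product.

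The paper keeps $\chi(p-1)\int_\Omega u^p|\Delta v|$ and applies H\"older as $\|u\|_{L^{pq'}}^p\|\Delta v\|_{L^q}$. It then invokes Neumann elliptic regularity $\|\Delta v\|_{L^q}\le C\|uw\|_{L^q}$ for $-\Delta v+v=uw$, and splits $\|uw\|_{L^q}\le\|u\|_{L^{pq'}}\|w\|_{L^{p_0}}$. You instead use the pointwise identity $-\Delta v=uw-v\le uw$, which needs only $v\ge0$, and land directly on $\int_\Omega u^{p+1}w$.

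With $q=p_0$, your exponent $(p+1)\frac{q}{q-1}$ equals the paper's $pq'=(p+1)\bigl(1-\frac1{p_0}\bigr)^{-1}$. Both routes therefore give the same bound $C\|u\|_{L^{(p+1)q/(q-1)}}^{p+1}$, the same Gagliardo--Nirenberg power $\frac{2(p+1/q)}{m+p-1}$, and the same requirement $m>1+\frac1q$.

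What each buys: your route is more elementary, since it needs no $W^{2,q}$ elliptic estimate. The paper's $|\Delta v|$-based template is the one that carries over to the fully parabolic case. There $\Delta v=v_t+v-uw$ has no sign, and only weighted maximal Sobolev regularity controls $\Delta v$.

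Two small corrections:
\begin{itemize}
\item Integration by parts gives $-\frac{\chi(p-1)}{p}\int_\Omega u^p\Delta v$ in your first display, not $+$. Only with the minus sign does substituting $\Delta v=v-uw$ give $\frac{\chi(p-1)}{p}\int_\Omega u^{p+1}w-\frac{\chi(p-1)}{p}\int_\Omega u^pv$, which is what justifies your second inequality.
\item The sign argument is not ``what brings the threshold down to $m>1$.'' As your own final sentence concedes, the elliptic route gives the same count. What matters in both is keeping $uw$ together in the H\"older step, and having $w$ bounded in $L^q$ for arbitrarily large $q$ when $n=2$.
\end{itemize}
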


\begin{prop}\label{prop-u-34}
In the parabolic--elliptic--parabolic
system \eqref{sys-kappa-0},
suppose that the assumptions in Lemma \ref{lem:winf} are valid, and
$$
n\in \{ 3,4\}, \ \ m>3-\frac4n.
$$
Then, for every
$p>1$, there exists $C(p)>0$ such that
$$
 \|u(\cdot,t)\|_{L^p(\Omega)}\le C(p)
 \qquad\text{for all }t\in(0,T_{\max}).
$$
\end{prop}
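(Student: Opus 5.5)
The plan is to test the $u$-equation in \eqref{sys-kappa-0} by $u^{p-1}$ and to use the elliptic equation $\Delta v=v-uw$ to convert the chemotactic term into a term involving the product $uw$. This product will be controlled by the uniform bound $\|w\|_{L^q}\le C$ of Lemma~\ref{lem:winf}, with $q<\frac{n}{n-2}$ taken close to $\frac{n}{n-2}$. It suffices to treat large $p$, since smaller $p$ then follow from H\"older's inequality on the bounded domain.

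\textbf{Energy identity.} Integrating by parts, I expect
\[
\frac1p\frac{d}{dt}\int_\Omega u^p+(p-1)k_D\int_\Omega u^{p+m-3}|\nabla u|^2+\int_\Omega u^p
\le -\frac{(p-1)\chi}{p}\int_\Omega u^p\Delta v+\int_\Omega \varphi u^{p-1}.
\]
Since $-\Delta v=uw-v\le uw$, the first term on the right is at most $\frac{(p-1)\chi}{p}\int_\Omega u^{p+1}w$. The $\varphi$-term is bounded by $\frac12\int_\Omega u^p+C$ via Young's inequality. Put $f:=u^{s}$ with $s=\frac{p+m-1}{2}$, so that the diffusion term equals $c_p\int_\Omega|\nabla f|^2$.

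\textbf{Chemotactic term.} I keep the product $u^{p+1}w$ intact and apply H\"older's inequality with $q'=\frac{q}{q-1}$:
\[
\int_\Omega u^{p+1}w\le \|w\|_{L^q}\,\|u\|_{L^{(p+1)q'}}^{p+1}\le C\|f\|_{L^r}^{\frac{p+1}{s}},\qquad r=\frac{2(p+1)q'}{p+m-1}.
\]
Next I apply Lemma~\ref{lem:GN}(i) with lower exponent $\frac1s$ (sub-unit is allowed), since $\|f\|_{L^{1/s}}^{1/s}=\int_\Omega u\le C$ by Lemma~\ref{lem-mass}. This gives $\|f\|_{L^r}\le C\|\nabla f\|_{L^2}^{a}+C$ with $a=\frac{s-1/r}{s+1/n-1/2}$.

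Absorption via Young's inequality requires $\frac{(p+1)a}{s}<2$, that is,
\[
(p+1)\Bigl(s-\frac1r\Bigr)<2s\Bigl(s+\frac1n-\frac12\Bigr).
\]
Since $\frac{p+1}{r}=\frac{s}{q'}$, dividing by $s$ reduces this to $p+1-\frac1{q'}<p+m-2+\frac2n$, which is
\[
m>3-\frac2n-\Bigl(1-\frac1q\Bigr).
\]
As $q\nearrow\frac{n}{n-2}$, the right-hand side decreases to $3-\frac4n$. Hence for $m>3-\frac4n$ a suitable $q$ exists, and then the condition holds for every $p$.

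\textbf{Main obstacle: admissibility of the Gagliardo--Nirenberg exponent.} Lemma~\ref{lem:GN}(i) requires $r\le\frac{2n}{n-2}$, and this is where the restriction $n\in\{3,4\}$ enters. For large $p$ we have $r\approx 2q'$, with $q'$ slightly larger than $\frac n2$. For $n=3$, the bound $\frac{2n}{n-2}=6$ comfortably exceeds $2q'\approx3$. For $n=4$ the margin is tight, because $2q'$ is slightly above $4=\frac{2n}{n-2}$. In that case I would use the exact formula $r=2q'\frac{p+1}{p+m-1}$ together with $m>2$: choosing $q$ close enough to $2$ that $q'<\frac{p+m-1}{p+1}\cdot 2$ gives $r<4$. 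I also need to check $a\in[0,1]$; this amounts to $r>\frac1s$, which holds for large $p$.

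\textbf{Conclusion.} With the admissibility settled, Young's inequality yields $\int_\Omega u^{p+1}w\le\varepsilon\int_\Omega|\nabla f|^2+C$. Absorbing this into the diffusion term leaves
\[
\frac{d}{dt}\int_\Omega u^p+\frac p2\int_\Omega u^p\le C.
\]
An ODE comparison then gives $\sup_{t\in(0,T_{\max})}\|u(\cdot,t)\|_{L^p}\le C(p)$, and the bound for smaller $p$ follows by H\"older's inequality.
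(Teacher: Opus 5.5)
Your proposal is correct and follows essentially the paper's route. It uses the same Gagliardo--Nirenberg step with the sub-unit lower exponent $\frac{2}{p+m-1}$ supplied by the mass bound, the same $p$-independent absorption condition $m>2-\frac2n+\frac1q$ forcing $q\nearrow\frac{n}{n-2}$, and the same $p$-dependent choice of $q$ (needed for $n=4$) keeping the target exponent at most $\frac{2n}{n-2}$. The only difference is that you control the chemotactic term via the pointwise inequality $-\Delta v=uw-v\le uw$ instead of H\"older's inequality plus elliptic regularity $\|\Delta v\|_{L^q}\le C\|uw\|_{L^q}$; this is slightly more elementary and arrives at exactly the same quantity $\|w\|_{L^q}\|u\|_{L^{(p+1)q'}}^{p+1}$.
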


\begin{prop}\label{prop-u-n_>=4}
In the parabolic--elliptic--parabolic
system \eqref{sys-kappa-0},
suppose that the assumptions in Lemma \ref{lem:winf} are valid, and
$$
n\ge 4, \ \ m>1+\frac n4.
$$
Then, for $p$ sufficiently large, there exists
$C(p)>0$ such that
$$
 \|u(\cdot,t)\|_{L^{p}(\Omega)}\le C(p)
 \qquad\text{for all }t\in(0,T_{\max}).
$$
\end{prop}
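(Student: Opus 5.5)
We test the first equation of \eqref{sys-kappa-0} with $u^{p-1}$ and, because the $L^p$ bound for $w$ in Lemma~\ref{lem:winf} alone is not strong enough once $n\ge5$, we couple this with an $L^s$ estimate for $w$. Using $D(u)\ge k_Du^{m-1}$, the diffusion produces the dissipation
\[
 \frac{4(p-1)k_D}{(m+p-1)^2}\int_\Omega\bigl|\nabla u^{\frac{m+p-1}{2}}\bigr|^2 .
\]
The taxis term gives, after integration by parts and using $-\Delta v=uw-v\le uw$,
\[
 (p-1)\chi\int_\Omega u^{p-1}\nabla u\cdot\nabla v=-\frac{(p-1)\chi}{p}\int_\Omega u^p\Delta v\le\frac{(p-1)\chi}{p}\int_\Omega u^{p+1}w .
\]
The $-uw$ absorption term has a good sign, and $\varphi$ contributes at most $\varphi_*\int_\Omega u^{p-1}$.

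The first attempt is to use only the bound $\|w\|_{L^q}\le C$ with $q<\frac n{n-2}$ from Lemma~\ref{lem:winf}. By H\"older,
\[
 \int_\Omega u^{p+1}w\le \|f\|_{L^{r}}^{\frac{2(p+1)}{m+p-1}}\|w\|_{L^q},
 \qquad f=u^{\frac{m+p-1}{2}},\quad r=\frac{2(p+1)q'}{m+p-1}.
\]
We then apply Lemma~\ref{lem:GN}(i) with lower exponent $\frac{2}{m+p-1}$, which is controlled by the mass bound of Lemma~\ref{lem-mass}. The power of $\|\nabla f\|_{L^2}$ is subquadratic exactly when $p+1-\frac1{q'}<m+p-2+\frac2n$. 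Since $\frac1{q'}$ can be taken arbitrarily close to $\frac2n$, this becomes $m>3-\frac4n$. However, Lemma~\ref{lem:GN}(i) also requires $r\le\frac{2n}{n-2}$. For large $p$ we have $r\to 2q'>n$, and this exceeds $\frac{2n}{n-2}$ as soon as $n>4$. So this route works only for $n=4$, where $3-\frac4n=1+\frac n4$; it fails for $n\ge5$.

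For $n\ge5$ I would therefore work with the coupled functional $y(t)=\int_\Omega u^p+\int_\Omega w^s$, with $s>1$ chosen suitably in terms of $p$. Testing the $w$-equation with $w^{s-1}$ gives the dissipation $\int_\Omega|\nabla w^{s/2}|^2$ and the source $\int_\Omega vw^{s-1}\le\|v\|_{L^{\beta}}\|w\|_{L^{(s-1)\beta'}}^{s-1}$. Elliptic $L^\beta$ regularity for $-\Delta v+v=uw$ controls $v$ in $W^{2,\beta}$, and hence in $L^{\frac{n\beta}{n-2\beta}}$, by $\|uw\|_{L^\beta}$. So both cross terms, $\int_\Omega u^{p+1}w$ and $\int_\Omega vw^{s-1}$, are products of powers of $u$ and $w$. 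I would keep each one as a single product. I would estimate it by a three-factor H\"older inequality, and then apply Lemma~\ref{lem:GN}(i) separately to $u^{\frac{m+p-1}{2}}$ (lower norm from the mass) and to $w^{s/2}$ (lower norm from Lemma~\ref{lem:winf}). Only after combining the factors would I apply Young's inequality with exponents $\frac{2}{\theta_1}$ and $\frac2{\theta_2}$, where $\theta_1,\theta_2$ are the total gradient powers of the two factors. The resulting requirement is $\theta_1+\theta_2<2$ with every Lebesgue exponent inside the Sobolev range. Plugging in the limits $q\nearrow\frac n{n-2}$ and $p\to\infty$, with $s\sim p$, this should reduce to $m>1+\frac n4$. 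With the $-\int_\Omega u^p$ and $-\int_\Omega w^s$ terms, which are themselves controlled via Lemma~\ref{lem:GN}(i), we get $y'+cy\le C$, and hence the bound for all large $p$.

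The main obstacle is this exponent bookkeeping: choosing $s$, $\beta$ and the H\"older splits so that all interpolation exponents stay below $\frac{2n}{n-2}$ while the combined gradient power remains below $2$ precisely when $m>1+\frac n4$. I would first try the balanced choice $s=\frac{m+p-1}{2}\cdot\frac{2}{m-1}$, so that the dissipation rates of $u$ and $w$ scale alike, and then tune $\beta$ near the Sobolev endpoint.
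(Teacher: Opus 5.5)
Your coupled-functional route is different from the paper's and it can be completed. As written, though, it stops exactly where the proof lies: the exponent bookkeeping you defer is the entire argument, and the choice of $s$ you suggest does not close it. Your $n=4$ argument by the direct route is fine; it is the paper's Proposition~\ref{prop-u-34}.

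Put $X=\bigl\|\nabla u^{\frac{p+m-1}{2}}\bigr\|_{L^2}^2$, $Y=\bigl\|\nabla w^{\frac s2}\bigr\|_{L^2}^2$ and $s=\lambda p$, and let $q<\frac n{n-2}$ be the exponent from Lemma~\ref{lem:winf}. Lemma~\ref{lem:GN}(i), combined with the mass bound and with $\|w\|_{L^q}\le C$ respectively, gives
\[
 \|u\|_{L^R}\le CX^{\frac{1-1/R}{p+m-2+2/n}}+C,\qquad
 \|w\|_{L^S}\le CY^{\frac{1/q-1/S}{s/q+2/n-1}}+C,
\]
for $R\le\frac{n(p+m-1)}{n-2}$ and $q<S\le\frac{ns}{n-2}$.

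In $\int_\Omega u^{p+1}w\le\|u\|_{L^R}^{p+1}\|w\|_{L^S}$ with $\frac{p+1}R+\frac1S=1$, the optimal split is $R=\frac{n(p+m-1)}{n-2}$, so $\frac1S\to\frac2n$. The requirement that the total power of $(X,Y)$ be below one then reduces, as $p\to\infty$, to
\[
 \lambda>\frac{n-2q}{n(m-2)}.
\]
For the second cross term, write $\int_\Omega vw^{s-1}\le\|v\|_{L^T}\|w\|_{L^{(s-1)T'}}^{s-1}$ and $\|v\|_{L^T}\le C\|u\|_{L^{R_1}}\|w\|_{L^{S_1}}$ with $\frac1T=\frac1{R_1}+\frac1{S_1}-\frac2n$. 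The powers are $\frac{1-1/R_1}{p+m-2+2/n}$ on $X$ and $1-\frac{4/n-1/R_1}{s/q+2/n-1}$ on $Y$. Taking $\frac1{R_1}=\frac{n-2}{n(p+m-1)}$ and $\frac1{S_1}\in\bigl(\frac2n,\min\{\frac4n,\frac1q\}\bigr)$, absorption requires, as $p\to\infty$,
\[
 \lambda<\frac{4q}{n}.
\]
Your balanced choice $\lambda=\frac1{m-1}$ meets the first condition only if $2q(m-1)>n$. Since $q<\frac n{n-2}$, this forces $m>\frac n2$. So for $n\ge5$ and $1+\frac n4<m\le\frac n2$ the balanced choice fails. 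That condition does not involve the elliptic exponent, so tuning $\beta$ cannot rescue it.

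The repair is to take $\lambda$ in the window $\bigl(\frac{n-2q}{n(m-2)},\frac{4q}{n}\bigr)$. This window is nonempty iff $q(4m-6)>n$, and such a $q<\frac n{n-2}$ exists iff $m>1+\frac n4$. This is exactly the paper's requirement $p_0>\frac n{4m-6}$. All inequalities are then strict in the limit, hence hold for large $p$, and all exponents stay in the admissible Gagliardo--Nirenberg range. Young's inequality then gives $y'+c_0y\le C$ for $y=\int_\Omega u^p+\int_\Omega w^s$.

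The paper proceeds differently: it integrates the $u$-inequality in time with an exponential weight and interpolates $\|w\|_{L^\beta}\le C\|w\|_{L^\gamma}^\theta$ with $\gamma>\frac n2$. It then controls the time integral of $\|w\|_{L^\gamma}^{\theta k}$ through Lemma~\ref{lem:GN}(ii), weighted maximal Sobolev regularity for the $w$-equation, the elliptic bound $\|v\|_{L^\rho}\le C\|uw\|_{L^{\sigma'}}$, and a self-absorption step. Your approach is more elementary: it needs no maximal regularity and yields a pointwise-in-time differential inequality. It also makes visible that the $w^s$-dissipation with $s\approx\frac{4q}{n}p$ does the work of the maximal-regularity step. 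The paper's scheme, in turn, has no extra parameter to tune and carries over almost verbatim to $\kappa=1$. There, neither $-\Delta v\le uw$ nor the elliptic control of $v$ by $uw$ holds pointwise in time.
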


Assuming Propositions~\ref{prop-u-2}--\ref{prop-u-n_>=4}  for the moment, we first complete
the proof of Theorem~\ref{main_theorem_kappa=0}.

%

{
\begin{proof}[Proof of Theorem~\ref{main_theorem_kappa=0}]
The preceding $L^p$-estimates, combined with the equations for $v$ and $w$,
yield the integrability required for $u\nabla v$ in the Tao--Winkler
iteration lemma. The $L^\infty$-bootstrap result of Tao--Winkler
\cite[Appendix, Lemma~4.1]{TaoWinkler2012JDE} and standard elliptic and
parabolic regularity theory yield the asserted boundedness and,
by Lemma~\ref{global_bounded_lemma}, global existence; see
\cite[Lemma~3.3]{wang2025} for this standard argument.
\end{proof}}

For the rest of this section,  we complete the proofs of Propositions \ref{prop-u-2}, \ref{prop-u-34}
 and \ref{prop-u-n_>=4} respectively.

\begin{proof}[Proof of Proposition \ref{prop-u-2}]
Fix $p>1$. Testing the first equation in \eqref{sys-kappa-0} by $pu^{p-1}$,
integrating by parts, and applying Young's inequality, we obtain
\begin{align*}
 \frac{d}{dt}\int_\Omega u^p
 &=p\int_\Omega u^{p-1}\nabla\!\cdot(D(u)\nabla u)
   -\chi p\int_\Omega u^{p-1}\nabla\!\cdot(u\nabla v)\\
 &\quad-p\int_\Omega u^p-p\int_\Omega u^pw
   +p\int_\Omega u^{p-1}\varphi\\
 &\le -p(p-1)\int_\Omega u^{p-2}D(u)|\nabla u|^2
   -\chi(p-1)\int_\Omega u^p\Delta v\\
 &\quad-\int_\Omega u^p+\int_\Omega\varphi^p.
\end{align*}
Using the assumptions on $D$ and $\varphi$, we therefore find
\begin{align}
 \frac{d}{dt}\int_\Omega u^p+\int_\Omega u^p
 & \le  - k_D p(p-1)\int_\Omega u^{p+m-3}|\nabla u|^2\notag\\
 &\quad
   +\chi(p-1)\int_\Omega u^p | \Delta v |
   +\int_\Omega\varphi^p \notag\\
& \le -\frac{4k_Dp(p-1)}{(p+m-1)^2}
 \int_\Omega\left|\nabla u^{\frac{p+m-1}{2}}\right|^2\notag\\
 &\quad+\chi(p-1)\int_\Omega u^p|\Delta v|+|\Omega|\varphi_*^p.
 \label{eq:energybase_n2}
\end{align}

Thus, it remains to estimate the chemotactic term in \eqref{eq:energybase_n2}
and show that it can be absorbed by the negative diffusion term.

We next estimate the chemotactic term in \eqref{eq:energybase_n2}
$$
\int_\Omega u^p|\Delta v|.
$$
Let $p_0>1$ be specified below. By Lemma~\ref{lem:winf},
 when $n=2$,
$$
 \sup_{t\in(0,T_{\max})}\|w(\cdot,t)\|_{L^{p_0}}\le C.
$$
Choose conjugate exponents $q,q'>1$ such that
$$
 \frac1q+\frac1{q'}=1,
 \qquad
 \frac1q=\frac1{p q'}+\frac1{p_0}.
$$
Solving these relations gives
\begin{equation}\label{eq:qprime_2d}
 q'=\left(1+\frac1p\right)
 \left(1-\frac1{p_0}\right)^{-1}>1,
 \qquad
 q=\left(1-\frac1{q'}\right)^{-1}>1.
\end{equation}
Then H\"older's inequality and elliptic regularity for $-\Delta v+v=uw$  yield
\begin{align}
 \int_\Omega u^p|\Delta v|
 &\le \|u\|_{L^{p q'}}^p\|\Delta v\|_{L^q}
 &\le C\|u\|_{L^{p q'}}^p\|uw\|_{L^q}\notag\\
 &\le C\|u\|_{L^{p q'}}^{p+1}\|w\|_{L^{p_0}}
 \le C\|u\|_{L^{p q'}}^{p+1}.
 \label{eq:cross_2d}
\end{align}

{By Lemma~\ref{lem:GN}(i) and the mass estimate in
Lemma~\ref{lem-mass},}
\begin{align}
  \left\|u^{\frac{p+m-1}{2}}\right\|_{L^{\frac{2pq'}{p+m-1}}}
  & \le  C\left\|\nabla u^{\frac{p+m-1}{2}}\right\|_{L^2}^{a}
  \left\|u^{\frac{p+m-1}{2}}\right\|_{L^{\frac{2}{p+m-1}}}^{1-a}\notag\\
  &\quad+C\left\|u^{\frac{p+m-1}{2}}\right\|_{L^{\frac{2}{p+m-1}}}\notag\\
  & \le  C\left\|\nabla u^{\frac{p+m-1}{2}}\right\|_{L^2}^{a} + C.
 \label{eq:GN_2d}
\end{align}
where, by \eqref{eq:qprime_2d},
\begin{equation}\label{eq:a_2d}
 a=1-\frac1{p q'}
 =1-\frac1{p+1}\left(1-\frac1{p_0}\right)\in(0,1).
\end{equation}
Combining \eqref{eq:energybase_n2}, \eqref{eq:cross_2d}, and
\eqref{eq:GN_2d}, we obtain
\begin{align}
\frac{d}{dt}\int_\Omega u^p+\int_\Omega u^p
 &\le -\frac{4k_Dp(p-1)}{(p+m-1)^2}
 \int_\Omega\left|\nabla u^{\frac{p+m-1}{2}}\right|^2\notag\\
 &\quad+C\|u\|_{L^{pq'}}^{p+1}+|\Omega|\varphi_*^p\notag\\
 &= -\frac{4k_Dp(p-1)}{(p+m-1)^2}
 \left\|\nabla u^{\frac{p+m-1}{2}}\right\|_{L^2}^2\notag\\
 &\quad+C\left\|u^{\frac{p+m-1}{2}}\right\|_{L^{\frac{2pq'}{p+m-1}}}^{\frac{2(p+1)}{p+m-1}}
 +|\Omega|\varphi_*^p\notag\\
 &\le -\frac{4k_Dp(p-1)}{(p+m-1)^2}
 \left\|\nabla u^{\frac{p+m-1}{2}}\right\|_{L^2}^2\notag\\
 &\quad+C\left\|\nabla u^{\frac{p+m-1}{2}}\right\|_{L^2}^{\frac{2a(p+1)}{p+m-1}}+C.
 \label{eq:final_2d}
\end{align}
By Young's inequality, the positive gradient term in \eqref{eq:final_2d}
can be absorbed whenever
$$
 \frac{2a(p+1)}{p+m-1}<2.
$$
In view of \eqref{eq:a_2d}, this condition is equivalent to
$$
 m>1+\frac1{p_0}.
$$
Since $m>1$, we may choose $p_0$ sufficiently large so that this
inequality holds. Thus
\eqref{eq:final_2d} gives
$$
 \frac{d}{dt}\int_\Omega u^p+\int_\Omega u^p\le C,
$$
and the asserted bound follows by comparison.
\end{proof}

We next prove Proposition \ref{prop-u-34}, which concerns the dimensions
$n\in \{3,4\}$.
The proof proceeds along the same lines as that of
Proposition \ref{prop-u-2}. We therefore omit the calculations that are
independent of the space dimension and focus on the steps where the
dimension plays an essential role.

\begin{proof}[Proof of Proposition \ref{prop-u-34}]
We deal with the dimension $n\in \{3,4\}$.
Fix $p>1$. Arguing as in the derivation of \eqref{eq:energybase_n2}, we obtain
\begin{equation}\label{0-lm-u-pf-key}
 \frac{d}{dt}\int_\Omega u^p+\int_\Omega u^p
 \le -\frac{4k_Dp(p-1)}{(p+m-1)^2}
 \int_\Omega\left|\nabla u^{\frac{p+m-1}{2}}\right|^2
 +\chi(p-1)\int_\Omega u^p|\Delta v|+|\Omega|\varphi_*^p.
\end{equation}
Choose
$$
1<p_0<\frac{n}{n-2},
$$
to be specified below. By Lemma~\ref{lem:winf},
$$
 \sup_{t\in(0,T_{\max})}\|w(\cdot,t)\|_{L^{p_0}}\le C(p_0).
$$
Proceeding as in \eqref{eq:cross_2d}, we obtain
\begin{align} \label{eq:cross_low_dim}
 \int_\Omega u^p|\Delta v|
 \le \|u\|_{L^{p q'}}^p\|\Delta v\|_{L^q}
 \le C\|u\|_{L^{p q'}}^p\|uw\|_{L^q}
 \le C\|u\|_{L^{p q'}}^{p+1},
\end{align}
provided that
$$
 q,q'>1,\qquad
 \frac1q+\frac1{q'}=1,
 \qquad
 \frac1q=\frac1{p q'}+\frac1{p_0}.
$$
Solving these relations gives
\begin{equation}\label{0-lm-u-pf-qq'}
 q'=\left(1+\frac1p\right)
 \left(1-\frac1{p_0}\right)^{-1}>1,
 \qquad
 q=\left(1-\frac1{q'}\right)^{-1}>1
\end{equation}
for the fixed $p>1$ and $1<p_0<n/(n-2)$.

{Lemma~\ref{lem:GN}(i) and the mass estimate in
Lemma~\ref{lem-mass} yield}
\begin{align}
 \left\|u^{\frac{p+m-1}{2}}\right\|_{L^{\frac{2p q'}{p+m-1}}}
 &\le C\left\|\nabla u^{\frac{p+m-1}{2}}\right\|_{L^2}^{a}
 \left\|u^{\frac{p+m-1}{2}}\right\|_{L^{\frac2{p+m-1}}}^{1-a} +C\left\|u^{\frac{p+m-1}{2}}\right\|_{L^{\frac2{p+m-1}}}\notag\\
 &\le C\left\|\nabla u^{\frac{p+m-1}{2}}\right\|_{L^2}^{a}+C,
 \label{eq:GN_standard_low}
\end{align}
where $a$ is determined by
$$
 \frac{p+m-1}{2p q'}
 =a\left(\frac12-\frac1n\right)
 +(1-a)\frac{p+m-1}{2}.
$$
In view of \eqref{0-lm-u-pf-qq'}, this identity can be written as
\begin{equation}\label{0-lm-u-pf-a1}
 \frac{p+m-1}{2}\frac1{p+1}\left(1-\frac1{p_0}\right)
 =a\left(\frac12-\frac1n\right)
 +(1-a)\frac{p+m-1}{2}.
\end{equation}

Combining \eqref{0-lm-u-pf-key}, \eqref{eq:cross_low_dim}, and
\eqref{eq:GN_standard_low}, we find
\begin{align}\label{0-lm-u-pf-last}
 \frac{d}{dt}\int_\Omega u^p+\int_\Omega u^p
 &\le -\frac{4k_Dp(p-1)}{(p+m-1)^2}
 \left\|\nabla u^{\frac{p+m-1}{2}}\right\|_{L^2}^2 +C\left\|\nabla u^{\frac{p+m-1}{2}}\right\|_{L^2}^{\frac{2a(p+1)}{p+m-1}}+C.
\end{align}
The positive gradient term can be absorbed by Young's inequality if
\begin{equation}\label{0-lm-u-pf-a2}
 \frac{2a(p+1)}{p+m-1}<2.
\end{equation}

We next show that $p_0$ can be chosen so that
$$
 p_0\in\left(1,\frac{n}{n-2}\right),\quad a\in(0,1),
$$
and \eqref{0-lm-u-pf-a2} holds.

First, by \eqref{0-lm-u-pf-a1}, condition \eqref{0-lm-u-pf-a2} is equivalent to
$$
 m>2-\frac{2}{n}+\frac{1}{p_0}.
$$
Since $m>3-\frac4n$, this inequality holds whenever $p_0$ is chosen
sufficiently close to $\frac{n}{n-2}$.

We then verify that the same choice ensures $a\in(0,1)$.
{By \eqref{0-lm-u-pf-a1}, this is equivalent to
\begin{equation}\label{pf-prop-0-34-a}
 \frac12-\frac1n
 <\frac{p+m-1}{2}\frac1{p+1}\left(1-\frac1{p_0}\right)
 <\frac{p+m-1}{2},
\end{equation}}
The second
inequality in \eqref{pf-prop-0-34-a}   holds for every $p>1$ and $p_0>1$.
For the first one, we only consider the limiting case that
$$
p_0 = \frac{n}{n-2},\ \ m>3-\frac{4}{n}.
$$
Then it is direct to verify that  for
$$
n\in\{3,4\},\ \  m>3-\frac{4}{n},
$$
we have
\begin{align*}
 \frac{p+m-1}{2}\frac1{p+1}\left(1-\frac1{p_0}\right)+ \frac1n
=  \frac{p+m-1}{2}\frac1{p+1}\frac{2}{n}+ \frac1n = \frac1n  \frac{2p+m}{p+1}> \frac{1}{2}.
\end{align*}
By continuity, the inequality
remains valid for $p_0<\frac{n}{n-2}$ sufficiently close to this
endpoint. Hence $a\in(0,1)$, and the required choice of $p_0$ is
possible. It is worth pointing out that
$$
\lim_{p\rightarrow +\infty} \frac1n  \frac{2p+m}{p+1} = \frac{2}{n}.
$$
This computation also explains why the direct interpolation argument
cannot cover $n\ge5$ for arbitrarily large $p$.

We have therefore shown that $p_0$ can be chosen sufficiently close
to $\frac{n}{n-2}$ so that $a\in(0,1)$ and \eqref{0-lm-u-pf-a2} hold
simultaneously. Applying Young's inequality to \eqref{0-lm-u-pf-last}, we obtain
$$
 \frac{d}{dt}\int_\Omega u^p+\int_\Omega u^p\le C.
$$
The asserted uniform $L^p$-bound now follows by comparison. This
completes the proof of Proposition~\ref{prop-u-34}.
\end{proof}


The preceding argument relies on keeping the target exponent in the
Gagliardo--Nirenberg inequality below the critical Sobolev exponent.
As indicated by the limiting relation above, this requirement cannot
be met for arbitrarily large $p$ when $n\ge5$. We therefore adopt a
different strategy in the proof of Proposition~\ref{prop-u-n_>=4}: instead of
estimating the chemotactic term directly, we retain its product
structure and perform two successive interpolation steps for $w$.
Notably, when $n=4$, the two approaches overlap and yield the same
threshold $m>2$.

\begin{proof}[Proof of Proposition \ref{prop-u-n_>=4}]
Let $n\ge4$ and $m>1+\frac n4$. Since $n\ge4$, we have
$$
 1<\frac{n}{n-2}.
$$
Moreover, the assumption on $m$ is equivalent to
$$
 \frac{n}{4m-6}<\frac{n}{n-2}.
$$
Consequently,
$$
 \max\left\{1,\frac{n}{4m-6}\right\}
 <\frac{n}{n-2},
$$
so the interval below is nonempty. We fix
$$
 p_0\in
 \left(
 \max\left\{1,\frac{n}{4m-6}\right\},
 \frac{n}{n-2}
 \right).
$$
The upper bound on $p_0$ allows us to apply
Lemma~\ref{lem:winf}, which yields
$$
 \sup_{t\in(0,T_{\max})}
 \|w(\cdot,t)\|_{L^{p_0}(\Omega)}
 \le C(p_0).
$$
The condition $p_0>1$ will allow us to interpolate between
$L^{p_0}$ and $L^\gamma$ for some  $\gamma>p_0$, whereas
$p_0>\frac{n}{4m-6}$ will be used  in the final verification of the absorption conditions.

Arguing as in the proof of Proposition \ref{prop-u-2}, we obtain
\begin{equation}\label{lm-u-pf-key-higher}
 \frac{d}{dt}\int_\Omega u^p+\int_\Omega u^p
 \le -\frac{4k_Dp(p-1)}{(p+m-1)^2}
 \int_\Omega\left|\nabla u^{\frac{p+m-1}{2}}\right|^2
 +\chi(p-1)\int_\Omega u^p|\Delta v|+|\Omega|\varphi_*^p.
\end{equation}
We first derive preliminary estimates for the diffusion and
chemotactic terms in \eqref{lm-u-pf-key-higher} as follows
$$
\int_\Omega\left|\nabla u^{\frac{p+m-1}{2}}\right|^2\ \ \textrm{and}\ \ \int_\Omega u^p|\Delta v|.
$$
\begin{itemize}
\item {According to Lemma~\ref{lem:GN}(i) and the mass
estimate in Lemma~\ref{lem-mass}, one has}
\begin{align*}
 \|u^{\frac{p+m-1}{2}}\|_{L^{\frac{2n}{n-2}}}
 &\le C\|\nabla u^{\frac{p+m-1}{2}}\|_{L^2}
    +C\|u^{\frac{p+m-1}{2}}\|_{L^{\frac2{p+m-1}}}\\
 &\le C\|\nabla u^{\frac{p+m-1}{2}}\|_{L^2}+C,
\end{align*}
i.e.,
$$
C\int_\Omega\left|\nabla u^{\frac{p+m-1}{2}}\right|^2 +C\geq  \left\|u^{\frac{p+m-1}{2}}\right\|_{L^{\frac{2n}{n-2}}}^2 =\left\|u\right\|_{L^{\frac{n(p+m-1)}{n-2}}}^{p+m-1}.
$$
\item Again, by H\"older's inequality and elliptic regularity for $-\Delta v+v=uw$, we obtain
\begin{align}\label{lm-pf-higher-mixed}
 \int_\Omega u^p|\Delta v|
 \le \|u\|_{L^{p \sigma}}^p\|\Delta v\|_{L^{\sigma'}}
 \le C\|u\|_{L^{p \sigma}}^p\|uw\|_{L^{\sigma'}}
 \le C\|u\|_{L^{p \sigma}}^{p+1}\|w\|_{L^{\beta}},
\end{align}
provided that
$$
 \sigma,\sigma',\beta>1,\qquad
 \frac1\sigma+\frac1{\sigma'}=1,
 \qquad
 \frac1{\sigma'}=\frac1{p \sigma}+\frac1{\beta}.
$$
\end{itemize}
Thus, thanks to the above two preliminary estimates, \eqref{lm-u-pf-key-higher} becomes
\begin{equation}\label{lm-u-pf-key-higher-1}
 \frac{d}{dt}\int_\Omega u^p+\int_\Omega u^p
 \le - C_0 \left\|u\right\|_{L^{\frac{n(p+m-1)}{n-2}}}^{p+m-1}
 +C\|u\|_{L^{p \sigma}}^{p+1}\|w\|_{L^{\beta}}+C.
\end{equation}

In particular, we choose $\sigma$ such that
\begin{equation}\label{lm-pf-sigma-def}
p\sigma =\frac{n(p+m-1)}{n-2}  \ \  \textrm{i.e.}, \ \  \frac{1}{\sigma} = \frac{p}{p+m-1}\frac{n-2}{n}.
\end{equation}
and thus
\begin{equation}\label{lm-pf-higher-beta-sigma'}
\frac{1}{\sigma'}= 1-\frac{1}{\sigma}=1- \frac{p}{p+m-1}\frac{n-2}{n},\ \
\frac{1}{\beta}=1- \frac{p+1}{p \sigma} =1- \frac{p+1}{p+m-1}\frac{n-2}{n}.
\end{equation}
Obviously, if $n\geq 4$ and $\displaystyle m>1+\frac{n}{4}$, for $p>1$,
\begin{equation}\label{lm-pf-higher-sigma-sigma'}
\frac{1}{\sigma}<\frac{n-2}{n},\ \ \frac{1}{\sigma'}>\frac{2}{n}.
\end{equation}
Thanks to  the choice of $\sigma$ designated in \eqref{lm-pf-sigma-def}, the inequality \eqref{lm-u-pf-key-higher-1} becomes
\begin{align*}
\frac{d}{dt}\int_\Omega u^p+\int_\Omega u^p
 & \le -C_0 \|u\|_{L^{p\sigma}}^{p+m-1}
 +C\|u\|_{L^{p\sigma}}^{p+1}\|w\|_{L^{\beta}} + C.
\end{align*}
This leads to
\begin{equation}\label{lm-u-pf-key-higher-2}
\int_\Omega u^p \le -C_0\int_0^t e^{s-t} \|u\|_{L^{p\sigma}}^{p+m-1}\,ds+  C\int_0^t e^{s-t} \|u\|_{L^{p\sigma}}^{p+1}\|w\|_{L^\beta}\,ds+C.
\end{equation}

The remaining task is to estimate
$$
\int_0^t e^{s-t} \|u\|_{L^{p\sigma}}^{p+1}\|w\|_{L^\beta}\,ds,
$$
in terms of the first, negative integral on the right-hand side of
\eqref{lm-u-pf-key-higher-2}.

Let $p_0<\beta<\gamma$. Interpolation between $L^{p_0}$ and
$L^\gamma$ gives  the existence of $\theta \in (0,1)$  such that
\begin{equation}\label{lm-pf-higher-simple}
         \|w\|_{L^\beta} \le \|w\|_{L^{p_0}}^{1-\theta}\|w\|_{L^\gamma}^{\theta}\le C\|w\|_{L^\gamma}^{\theta},
\end{equation}
where
\begin{equation}\label{lm-pf-theta}
 \frac1\beta=\frac{1-\theta}{p_0}+\frac\theta\gamma.
\end{equation}
By the definition of $\beta$ in \eqref{lm-pf-higher-beta-sigma'},  for
$\displaystyle n\geq 4,\ \ m>1+\frac{n}{4},$
$$
\frac{1}{\beta}=1- \frac{p+1}{p+m-1}\frac{n-2}{n}> 1-  \frac{n-2}{n} =\frac{2}{n}.
$$
Thus  we fix
$$
\gamma>  \frac{n}{2} > \beta.
$$
Moreover,  for $n \geq 4$, according to the choice of $p_0$ at the beginning of this proof,
$$
\lim_{p\rightarrow +\infty } \frac{1}{\beta}= \lim_{p\rightarrow +\infty } 1- \frac{p+1}{p+m-1}\frac{n-2}{n} =  \frac{2}{n}\le  \frac{n-2}{n}<\frac{1}{p_0},
$$
which indicates that  there exists $P_1>1$, which depends on $n, m, p_0$ only, such that $\beta>p_0$ for $p> P_1$.
Hence,
\begin{equation*}
p_0<\beta<\gamma\ \ \textrm{for}\  p> P_1,
\end{equation*}
and  the existence of $\theta\in (0,1)$ satisfying  \eqref{lm-pf-theta} is guaranteed.

By \eqref{lm-pf-higher-simple} and H\"older's inequality,
\begin{align}
 \int_0^t e^{s-t} \|u\|_{L^{p\sigma}}^{p+1}\|w\|_{L^\beta}\,ds
 &\le C\int_0^t e^{s-t} \|u\|_{L^{p\sigma}}^{p+1}
 \|w\|_{L^\gamma}^{\theta}\,ds\notag\\
 &\le C \left(\int_0^t e^{s-t}
 \|u\|_{L^{p\sigma}}^{p+m-1}\,ds\right)^{1-\frac{1}{k}}\notag\\
 &\quad\times
 \left(\int_0^t e^{s-t}\|w\|_{L^\gamma}^{\theta k}\,ds\right)^{\frac{1}{k}}.
 \label{lm-pf-higher-split}
\end{align}
where
$$
 k:=\frac{p+m-1}{m-2},
$$
and when $n\geq 4$, $\displaystyle m>1+\frac{n}{4}$,  $p>P_1$,  the conjugate exponents
{$$
\frac{k}{k-1}= \frac{p+m-1}{p+1}
$$}
exceed one.

Secondly, we focus on the following term in \eqref{lm-pf-higher-split}
$$
\int_0^t e^{s-t}\|w\|_{L^\gamma}^{\theta k}\,ds.
$$
{By Lemma~\ref{lem:GN}(ii), if}
$$
\gamma >p_0,\ \ \frac1\rho-\frac2n<0
$$
there exists $\alpha\in (0,1)$ such  that
\begin{align*}
 \|w\|_{L^\gamma}
 \le C\|\Delta w\|_{L^\rho}^{\alpha}
       \|w\|_{L^{p_0}}^{1-\alpha}
       +C\|w\|_{L^{p_0}} \le C\|\Delta w\|_{L^\rho}^{\alpha}+C,
\end{align*}
where
\begin{equation}\label{lm-pf-alpha}
\frac1\gamma
 =\alpha\left(\frac1\rho-\frac2n\right)+\frac{1-\alpha}{p_0}.
\end{equation}
In view of \eqref{lm-pf-higher-sigma-sigma'}, we may define $\rho>1$ by
\begin{equation}\label{lm-pf-rho}
\frac{1}{\rho} =\frac{1}{\sigma'}- \frac{2}{n}>0.
\end{equation}
Using the definitions of $\sigma'$
in \eqref{lm-pf-higher-beta-sigma'} and $\rho$ in \eqref{lm-pf-rho}, we further compute
$$
\frac1\rho-\frac2n =  \frac{1}{\sigma'}- \frac{4}{n}<0
$$
provided that
$$
p>\frac{(n-4)(m-1)}{2}.
$$
Therefore, when
$$
p>\max \left\{ P_1, \ \frac{(n-4)(m-1)}{2} \right\},
$$
then $\gamma> \beta>p_0$, and thus there exists unique  $\alpha\in (0,1)$ satisfying \eqref{lm-pf-alpha}.

Then it follows from  Lemma \ref{max_sobolev_regularity} and the equation satisfied by $w$ that whenever
\begin{equation}\label{atk>1}
\alpha\theta k=\alpha\theta \cdot\frac{p+m-1}{m-2}>1,
\end{equation}
we  have
\begin{align}\label{lm-pf-higher-w-1}
\int_0^t e^{s-t}\|w\|_{L^\gamma}^{\theta k}\,ds
 \le C\int_0^t e^{s-t}\|\Delta w\|_{L^\rho}^{\alpha\theta k}\,ds+C \le C\int_0^t e^{s-t}\|v\|_{L^\rho}^{\alpha\theta k}\,ds+C.
\end{align}
Moreover, according to the choice of $\rho$ in \eqref{lm-pf-rho},  we have the Sobolev embedding
$$
W^{2,\sigma'}(\Omega)\hookrightarrow L^\rho(\Omega).
$$
Thus, similar to \eqref{lm-pf-higher-mixed},   we obtain
$$
\|v\|_{L^\rho} \le C\|v\|_{W^{2,\sigma'}}\le C \|uw\|_{L^{\sigma'}}\le C\|u\|_{L^{p \sigma}}\|w\|_{L^{\beta}},
$$
which,  together with \eqref{lm-pf-higher-simple}, \eqref{lm-pf-higher-w-1} and Young's inequality, yields
\begin{align*}
\int_0^t e^{s-t}\|w\|_{L^\gamma}^{\theta k}\,ds
& \le C\int_0^t e^{s-t}\|u\|_{L^{p \sigma}}^{\alpha\theta k}\|w\|_{L^{\beta}}^{\alpha\theta k}\,ds+C\\
& \le C\int_0^t e^{s-t}\|u\|_{L^{p \sigma}}^{\alpha\theta k}\|w\|_{L^{\gamma}}^{\alpha\theta^2 k}\,ds+C\\
& \le \frac{1}{2}\int_0^t e^{s-t}\|w\|_{L^\gamma}^{\theta k}\,ds+ C\int_0^t e^{s-t}\|u\|_{L^{p \sigma}}^{\frac{\alpha\theta}{1-\alpha\theta} k}\,ds+C.
\end{align*}
Absorbing the first term on the right-hand side into the
left-hand side, we obtain
\begin{equation}\label{lm-pf-higher-w-2}
\int_0^t e^{s-t}\|w\|_{L^\gamma}^{\theta k}\,ds
\le  C\int_0^t e^{s-t}\|u\|_{L^{p \sigma}}^{\frac{\alpha\theta}{1-\alpha\theta} k}\,ds+C.
\end{equation}

Substituting \eqref{lm-pf-higher-w-2} into \eqref{lm-pf-higher-split}, we obtain
\begin{align*}
 & \int_0^t e^{s-t} \|u\|_{L^{p\sigma}}^{p+1}\|w\|_{L^\beta}\,ds \\
 & \le C \left(\int_0^t e^{s-t}
 \|u\|_{L^{p\sigma}}^{p+m-1}\,ds\right)^{1-\frac{1}{k}}\\
 &\quad\times
 \left(\int_0^t e^{s-t}\|u\|_{L^{p \sigma}}^{\frac{\alpha\theta}{1-\alpha\theta} k}\,ds+1\right)^{\frac{1}{k}}\\
 &\le C \left(\int_0^t e^{s-t}
 \|u\|_{L^{p\sigma}}^{p+m-1}\,ds\right)^{1-\frac{1}{k}}\\
 &\quad\times
 \left(\int_0^t e^{s-t}\|u\|_{L^{p \sigma}}^{\frac{\alpha\theta}{1-\alpha\theta} k}\,ds\right)^{\frac{1}{k}}\\
 &\quad+C \left(\int_0^t e^{s-t}
 \|u\|_{L^{p\sigma}}^{p+m-1}\,ds\right)^{1-\frac{1}{k}}.
\end{align*}
Thanks to this crucial estimate, it follows from \eqref{lm-u-pf-key-higher-2} that
\begin{align}
\int_\Omega u^p \le &-C_0\int_0^t e^{s-t} \|u\|_{L^{p\sigma}}^{p+m-1}\,ds \notag\\
&+ C \left(\int_0^t e^{s-t}
 \|u\|_{L^{p\sigma}}^{p+m-1}\,ds\right)^{1-\frac{1}{k}}
 \left(\int_0^t e^{s-t}\|u\|_{L^{p \sigma}}^{\frac{\alpha\theta}{1-\alpha\theta} k}\,ds\right)^{\frac{1}{k}}\notag\\
 &+C \left(\int_0^t e^{s-t}
 \|u\|_{L^{p\sigma}}^{p+m-1}\,ds\right)^{1-\frac{1}{k}}+C.
 \label{lm-u-pf-newkey-higher-3}
\end{align}

Before  we complete the last piece of the proof, we claim that {\it if
$$
n\geq 4,\ \ m>1+\frac{n}{4},\ \ p_0 \in \left( \frac{n}{4m-6}, \frac{n}{n-2}\right),
$$
then there exists
$$
P_2>\max \left\{ P_1, \ \frac{(n-4)(m-1)}{2} \right\},
$$
such that for $p>P_2$, \eqref{atk>1} and
\begin{equation}\label{prop-pf-alpha-theta}
\frac{\alpha\theta}{1-\alpha\theta} < m-2,\  \  \textrm{i.e.},\ \ \alpha\theta<1-\frac{1}{m-1}
\end{equation}
hold simultaneously}

Suppose the claim is valid. Recall
$$
 k:=\frac{p+m-1}{m-2}.
$$
Thus
$$
\frac{\alpha\theta}{1-\alpha\theta} k <p+m-1.
$$
By \eqref{lm-u-pf-newkey-higher-3} and Young's inequality, we have
\begin{align*}
\int_\Omega u^p \le &-C_0\int_0^t e^{s-t}
 \|u\|_{L^{p\sigma}}^{p+m-1}\,ds
 + \frac{C_0}{4} \int_0^t e^{s-t}
 \|u\|_{L^{p\sigma}}^{p+m-1}\,ds\\
 &+C \int_0^t e^{s-t}
 \|u\|_{L^{p \sigma}}^{\frac{\alpha\theta}{1-\alpha\theta} k}\,ds
 + \frac{C_0}{4} \int_0^t e^{s-t}
 \|u\|_{L^{p\sigma}}^{p+m-1}\,ds+C \\
\le &-C_0\int_0^t e^{s-t}
 \|u\|_{L^{p\sigma}}^{p+m-1}\,ds
 + \frac{C_0}{4} \int_0^t e^{s-t}
 \|u\|_{L^{p\sigma}}^{p+m-1}\,ds \\
 &+\frac{C_0}{4} \int_0^t e^{s-t}
 \|u\|_{L^{p\sigma}}^{p+m-1}\,ds
 + \frac{C_0}{4} \int_0^t e^{s-t}
 \|u\|_{L^{p\sigma}}^{p+m-1}\,ds+C\\
 \le & \ C.
\end{align*}
The desired conclusion follows.


It remains to prove the claim.
According to \eqref{lm-pf-theta} and \eqref{lm-pf-alpha},
$$
\alpha\theta = \frac{\frac{1}{p_0} -\frac{1}{\beta}}{\frac{1}{p_0}-\frac{1}{\rho}+\frac{2}{n}}= \frac{\frac{1}{p_0}-1+\frac{p+1}{p+m-1}\frac{n-2}{n}}{\frac{1}{p_0}-1+\frac{p}{p+m-1}\frac{n-2}{n}+\frac{4}{n}},
$$
and obviously,
$$
\lim_{p\rightarrow\infty}\alpha \theta= \frac{\frac{1}{p_0}-\frac{2}{n}}{\frac{1}{p_0}+\frac{2}{n}},
$$
Since $p_0<\frac{n}{n-2}$, the above limit is positive. Moreover, as $p\to\infty$
$$
k=\frac{p+m-1}{m-2} \to\infty.
$$
Hence
$\alpha\theta k>1$ for all sufficiently large $p$.

On the other hand,
$$
 \frac{\frac1{p_0}-\frac2n}
      {\frac1{p_0}+\frac2n}
 <1-\frac1{m-1}
$$
is equivalent to
$$
 p_0>\frac{n}{4m-6},
$$
which holds by the choice of $p_0$. By continuity, \eqref{prop-pf-alpha-theta}
therefore also holds for all sufficiently large $p$. We may thus
choose $P_2$ so that \eqref{atk>1} and \eqref{prop-pf-alpha-theta} hold simultaneously
whenever $p>P_2$.
The claim is proved.
\end{proof}

\section[Global Boundedness for the Fully Parabolic System (kappa=1)]{Fully Parabolic System ($\kappa=1$)}

In this section, we prove Theorem~\ref{main_theorem_kappa=1} for the
fully parabolic case. When $\kappa=1$, system~\eqref{sys} reads
\begin{numcases}{}
 u_t=\nabla\cdot(D(u)\nabla u)-\chi\nabla\cdot(u\nabla v)-u-uw+\varphi,
 & $x\in\Omega,\ t>0$,\nonumber\\
 v_t=\Delta v-v+uw,
 & $x\in\Omega,\ t>0$,\nonumber\\
 w_t=\Delta w-w+v,
 & $x\in\Omega,\ t>0$,\nonumber\\
 \frac{\partial u}{\partial\nu}
 =\frac{\partial v}{\partial\nu}
 =\frac{\partial w}{\partial\nu}=0,
 & $x\in\partial\Omega,\ t>0$,\nonumber\\
 u(x,0)=u_0(x),\quad v(x,0)=v_0(x),
 & $x\in\Omega$,\nonumber\\
 w(x,0)=w_0(x),\qquad x\in\Omega\label{sys-kappa-1}
\end{numcases}

The proof follows the same overall strategy as in Section~3. We first
derive uniform $L^p$-bounds for $w$ below the critical Sobolev exponent
and then establish uniform $L^p$-bounds for $u$ for all sufficiently
large $p$.

The principal difference from the parabolic--elliptic--parabolic case
is that $v$ now satisfies a parabolic equation. Consequently, $\Delta v$
can no longer be estimated pointwise in time by $uw$. To overcome this
difficulty, we combine weighted maximal Sobolev regularity with an
argument that preserves the product structure of the chemotactic term.

\begin{lem}\label{lp_w_kappa1}
In the fully parabolic
system~\eqref{sys-kappa-1}, suppose that the assumptions \eqref{eq:basic_assumptions} and
\eqref{eq1.3} are valid.
Then for $n\ge2$ and
$$
p\in\left[1,\frac{n}{n-2}\right),
$$
there exists $C(p)>0$ such that
$$
 \sup_{t\in(0,T_{\max})}\|w(\cdot,t)\|_{L^p(\Omega)}\le C(p).
$$
\end{lem}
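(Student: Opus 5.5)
The plan is to exploit the fact that, for $\kappa=1$, Lemma~\ref{lem-mass} already gives a pointwise-in-time $L^1$-bound for $v$. Concretely, Lemma~\ref{lem-mass} yields
$$
 \sup_{t\in(0,T_{\max})}\|v(\cdot,t)\|_{L^1(\Omega)}\le C,
$$
with $C$ independent of $t$. Here nonnegativity of $v$ from Lemma~\ref{global_bounded_lemma} is used to identify $\int_\Omega v$ with $\|v\|_{L^1}$. This is exactly the information that was missing in the parabolic--elliptic--parabolic case and had to be reconstructed there via the weighted estimate \eqref{eq:uw_weighted}. No such reconstruction is needed now.

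Next I apply Lemma~\ref{lem-regularity}(i) to the third equation of \eqref{sys-kappa-1}, viewed as
$$
 w_t=\Delta w-w+f,\qquad f:=v,
$$
with homogeneous Neumann data and initial value $w_0$. I take $T\in(0,T_{\max})$ arbitrary and $M:=\sup_{t\in(0,T_{\max})}\|v(\cdot,t)\|_{L^1}$, which is finite by the mass bound. Since $w_0\in W^{2,\infty}(\Omega)\subset L^p(\Omega)$ by \eqref{eq1.3}, estimate \eqref{eq:parabolic_L1_Lq} gives, for every $p\in[1,\frac{n}{n-2})$,
$$
 \sup_{t\in(0,T)}\|w(\cdot,t)\|_{L^p}\le C(p)\bigl(M+\|w_0\|_{L^p}\bigr).
$$
The constant is independent of $T$, so letting $T\nearrow T_{\max}$ yields the claim. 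The case $p=1$ is also immediate from Lemma~\ref{lem-mass} directly.

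The only point requiring care, and hence the nominal obstacle, is the uniformity in $T$ of the semigroup estimate. This rests on the integrability of
$$
 e^{-(t-s)}\Bigl(1+(t-s)^{-\frac n2(1-\frac1p)}\Bigr)
$$
over $(0,\infty)$. That integrability is guaranteed precisely by the restriction $\frac n2(1-\frac1p)<1$, equivalently $p<\frac{n}{n-2}$ (any finite $p$ when $n=2$). This is already built into Lemma~\ref{lem-regularity}(i), so no new computation is required. One should also note that $v$ is continuous on $\overline\Omega\times[0,T_{\max})$, so the variation-of-constants representation is legitimate.
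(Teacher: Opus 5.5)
Your proposal is correct and follows the paper's proof: it too takes the uniform $L^1$-bound for $v$ from Lemma~\ref{lem-mass} and uses it, with source $f=v$, in the $L^1$--$L^q$ smoothing estimate \eqref{eq:parabolic_L1_Lq} of Lemma~\ref{lem-regularity}(i). Your extra remarks (nonnegativity of $v$, $w_0\in L^p$, uniformity in $T$) are routine details the paper leaves implicit.
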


\begin{proof}
The source term in the $w$-equation is $v$. By Lemma~\ref{lem-mass},
$$
 \sup_{t\in(0,T_{\max})}\|v(\cdot,t)\|_{L^1(\Omega)}\le C.
$$
Applying Lemma~\ref{lem-regularity}(i), specifically
\eqref{eq:parabolic_L1_Lq}, to the equation
$$
 w_t=\Delta w-w+v
$$
with $f=v$, we obtain the asserted estimate for every
$1\le p<\frac{n}{n-2}$.
\end{proof}

We next establish uniform $L^p$-bounds for $u$ for all sufficiently
large $p$. The bounds for the remaining finite exponents then follow
from H\"older's inequality and the boundedness of $\Omega$.

\begin{prop}
\label{prop-parabolic}
In the fully parabolic system~\eqref{sys-kappa-1},
suppose that the assumptions in Lemma \ref{lp_w_kappa1} are valid, and
$$
n\ge 4, \ \ m>1+\frac n4.
$$
Then, for $p$ sufficiently large, there exists
$C(p)>0$ such that
$$
 \|u(\cdot,t)\|_{L^{p}(\Omega)}\le C(p)
 \qquad\text{for all }t\in(0,T_{\max}).
$$
\end{prop}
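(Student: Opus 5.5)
We follow the scheme of the proof of Proposition~\ref{prop-u-n_>=4}. The one change is that $\Delta v$ is no longer controlled pointwise in time by $uw$. Instead it is controlled in weighted time integrals via Lemma~\ref{max_sobolev_regularity} applied to $v_t=\Delta v-v+uw$.

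\textbf{Step 1: energy inequality.} Testing the $u$-equation by $pu^{p-1}$ gives, exactly as in \eqref{lm-u-pf-key-higher} together with the Sobolev bound via Lemma~\ref{lem:GN}(i) and Lemma~\ref{lem-mass},
\[
\frac{d}{dt}\int_\Omega u^p+\int_\Omega u^p\le -C_0\|u\|_{L^{p\sigma}}^{p+m-1}+\chi(p-1)\int_\Omega u^p|\Delta v|+C,
\qquad p\sigma=\frac{n(p+m-1)}{n-2}.
\]
We integrate this against $e^{s-t}$. H\"older's inequality in space gives $\int_\Omega u^p|\Delta v|\le\|u\|_{L^{p\sigma}}^p\|\Delta v\|_{L^{\sigma'}}$, and then H\"older's inequality in time with exponents $\frac{p+m-1}{p}$ and $\frac{p+m-1}{m-1}$ bounds the weighted integral of this product by
\[
\Big(\int_0^te^{s-t}\|u\|_{L^{p\sigma}}^{p+m-1}\Big)^{\frac p{p+m-1}}\Big(\int_0^te^{s-t}\|\Delta v\|_{L^{\sigma'}}^{\frac{p+m-1}{m-1}}\Big)^{\frac{m-1}{p+m-1}}.
\]

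\textbf{Step 2: maximal regularity for $v$ and $w$.} Lemma~\ref{max_sobolev_regularity} with $\delta$ chosen so that $\delta r=1$ (adjusting the weight if needed) and $r=\frac{p+m-1}{m-1}$ replaces $\|\Delta v\|_{L^{\sigma'}}$ by $\|uw\|_{L^{\sigma'}}+C$. This is the point where the product structure is kept: $\|uw\|_{L^{\sigma'}}\le\|u\|_{L^{p\sigma}}\|w\|_{L^\beta}$ with the same $\beta$ as in \eqref{lm-pf-higher-beta-sigma'}. Interpolation as in \eqref{lm-pf-higher-simple} gives $\|w\|_{L^\beta}\le C\|w\|_{L^\gamma}^\theta$, using Lemma~\ref{lp_w_kappa1} for the $L^{p_0}$ bound. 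Lemma~\ref{lem:GN}(ii) gives $\|w\|_{L^\gamma}\le C\|\Delta w\|_{L^\rho}^\alpha+C$. Maximal regularity for the $w$-equation bounds time integrals of $\|\Delta w\|_{L^\rho}$ by those of $\|v\|_{L^\rho}$. Finally, the embedding $W^{2,\sigma'}\hookrightarrow L^\rho$ together with maximal regularity once more for $v$ bounds those by time integrals of $\|uw\|_{L^{\sigma'}}\le\|u\|_{L^{p\sigma}}\|w\|_{L^\gamma}^\theta$. The exponents $\sigma,\beta,\rho,\gamma,\theta,\alpha$ and the choice $p_0\in\big(\max\{1,\frac n{4m-6}\},\frac n{n-2}\big)$ are taken verbatim from the proof of Proposition~\ref{prop-u-n_>=4}. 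This closes a self-referential inequality for $I_w:=\int_0^te^{s-t}\|w\|_{L^\gamma}^{\theta k'}$ with a suitable power $k'$. Young's inequality, using $\alpha\theta<1$, absorbs $I_w$ and yields $I_w\le C\int_0^te^{s-t}\|u\|_{L^{p\sigma}}^{\frac{\alpha\theta}{1-\alpha\theta}k'}+C$, the analogue of \eqref{lm-pf-higher-w-2}. A nested H\"older/Young step in time is needed to separate $\|u\|^{r}\|w\|^{\theta r}$ inside the time integral before invoking maximal regularity.

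\textbf{Step 3: closing, and the main obstacle.} Inserting these bounds into Step~1 gives an inequality of the form \eqref{lm-u-pf-newkey-higher-3}. Its right-hand side is controlled by $-C_0X+CX^{1-\frac1k}(X^{\lambda}+1)$ with $X=\int_0^te^{s-t}\|u\|_{L^{p\sigma}}^{p+m-1}$, and it closes provided the effective power of $\|u\|_{L^{p\sigma}}$ stays below $p+m-1$. As $p\to\infty$ this condition again reduces to $\frac{\alpha\theta}{1-\alpha\theta}<m-2$, and in the limit to $p_0>\frac n{4m-6}$, as in the claim ending the previous proof. The main obstacle is the bookkeeping of time exponents. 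Applying maximal regularity twice, first to $v$ in $L^{\sigma'}$ and then to $w$ in $L^\rho$, forces the time powers to match the split exponents, and we must check that the nested H\"older exponents all remain above $1$ (the analogue of \eqref{atk>1}) for $p$ large. If the second application is wasteful, we will first try to estimate $\|w\|_{L^\gamma}$ directly from the $w$-equation by semigroup smoothing in terms of $\|v\|_{L^\rho}$, and $\|v\|_{L^\rho}$ via maximal regularity, so that only one weighted-in-time absorption is needed. Once the bound holds for all sufficiently large $p$, the conclusion of Proposition~\ref{prop-parabolic} follows.
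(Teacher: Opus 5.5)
Your proposal is correct and follows the paper's proof essentially step for step. The chain is the same: the space--time H\"older split, then maximal regularity for $v$ with time exponent $\frac{p+m-1}{m-1}$, then the product bound $\|uw\|_{L^{\sigma'}}\le C\|u\|_{L^{p\sigma}}\|w\|_{L^\gamma}^{\theta}$ and the nested time-H\"older step producing $k=\frac{p+m-1}{m-2}$. After that come Lemma~\ref{lem:GN}(ii), maximal regularity for $w$ and once more for $v$ with exponent $\alpha\theta k>1$, Young absorption using $\alpha\theta<1$, and closure via $p_0>\frac{n}{4m-6}$. The only detail you leave implicit is that the step $\|v\|_{L^\rho}\le C\|\Delta v\|_{L^{\sigma'}}+C$ relies on the uniform $L^1$-bound for $v$ from Lemma~\ref{lem-mass}; your semigroup-smoothing fallback is not needed.
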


Assuming Proposition~\ref{prop-parabolic} for the moment, we first
complete the proof of Theorem~\ref{main_theorem_kappa=1}.


{
\begin{proof}[Proof of Theorem~\ref{main_theorem_kappa=1}]
The preceding $L^p$-estimates, combined with the equations for $v$ and $w$,
yield the integrability required for $u\nabla v$ in the Tao--Winkler
iteration lemma. The $L^\infty$-bootstrap result of Tao--Winkler
\cite[Appendix, Lemma~4.1]{TaoWinkler2012JDE} and standard regularity theory
yield the asserted boundedness and, by
Lemma~\ref{global_bounded_lemma}, global existence; see
\cite[Lemma~4.3]{wang2025} for this standard argument.
\end{proof}}

It remains to prove Proposition~\ref{prop-parabolic}. Its proof
parallels that of Proposition~\ref{prop-u-n_>=4}; we therefore omit
the steps that remain unchanged and focus on the modifications required
by the parabolic equation for $v$.
\begin{proof}[Proof of Proposition \ref{prop-parabolic}]
Let $n\ge4$ and $m>1+\frac n4$. We choose $p_0$ exactly as in the
proof of Proposition~\ref{prop-u-n_>=4}, namely
$$
 p_0\in
 \left(
 \max\left\{1,\frac{n}{4m-6}\right\},
 \frac{n}{n-2}
 \right).
$$
By Lemma~\ref{lp_w_kappa1},
$$
 \sup_{t\in(0,T_{\max})}
 \|w(\cdot,t)\|_{L^{p_0}(\Omega)}\le C(p_0).
$$

Fix $p>1$. Arguing as in the derivation of
\eqref{lm-u-pf-key-higher}, we obtain
$$
 \frac{d}{dt}\int_\Omega u^p+\int_\Omega u^p
 \le -\frac{4k_Dp(p-1)}{(p+m-1)^2}
 \int_\Omega\left|\nabla u^{\frac{p+m-1}{2}}\right|^2
 +\chi(p-1)\int_\Omega u^p|\Delta v|+|\Omega|\varphi_*^p,
$$
{while Lemma~\ref{lem:GN}(i) and the mass estimate in
Lemma~\ref{lem-mass} yield}
$$
C\int_\Omega\left|\nabla u^{\frac{p+m-1}{2}}\right|^2 +C\geq  \left\|u^{\frac{p+m-1}{2}}\right\|_{L^{\frac{2n}{n-2}}}^2 =\left\|u\right\|_{L^{\frac{n(p+m-1)}{n-2}}}^{p+m-1}.
$$
Consequently,
$$
 \frac{d}{dt}\int_\Omega u^p+\int_\Omega u^p
 \le -C_0\left\|u\right\|_{L^{\frac{n(p+m-1)}{n-2}}}^{p+m-1}
 +C\int_\Omega u^p|\Delta v|+C.
$$
Multiplying this differential inequality by $e^t$ and integrating
over $(0,t)$, we obtain
\begin{equation}\label{Lp est 1}
 \int_\Omega u^p
 \le -C_0\int_0^t e^{s-t}\|u\|_{L^{\frac{n(p+m-1)}{n-2}}}^{p+m-1}\,ds
 +C\int_0^t e^{s-t}\int_\Omega u^p|\Delta v|\,dx\,ds+C.
\end{equation}

In Proposition~\ref{prop-u-n_>=4}, the elliptic equation for $v$
provides a pointwise-in-time estimate of $\Delta v$ in terms of $uw$.
Here, however, $v$ satisfies
$$
 v_t=\Delta v-v+uw,
$$
so no analogous pointwise estimate is available. We therefore estimate
the time-integrated chemotactic term
$$
 \int_0^t e^{s-t}\int_\Omega u^p|\Delta v|\,dx\,ds
$$
directly.

Applying H\"older's inequality first in space and then in time, we
obtain
\begin{align}
 &\int_0^t e^{s-t}\int_\Omega u^p|\Delta v|\,dx\,ds\notag\\
 \le  & \ \int_0^t e^{s-t}  \|u\|_{L^{p \sigma}}^p\|\Delta v\|_{L^{\sigma'}}  \,ds    \notag\\
 \le & \ \left( \int_0^t e^{s-t}  \|u\|_{L^{p\sigma}}^{p+m-1}\,ds \right)^{\frac{p}{p+m-1}}
  \left(
 \int_0^t e^{s-t}
 \|\Delta v\|_{L^{\sigma'}}^{\frac{p+m-1}{m-1}}\,ds
 \right)^{\frac{m-1}{p+m-1}},
 \label{prop-pf-Delta v-1}
\end{align}
where we choose $\sigma$ so that $p\sigma$ coincides with the exponent
in the negative term of \eqref{Lp est 1}:
\begin{equation}\label{def sigma}
\frac{1}{\sigma} = \frac{p}{p+m-1}\frac{n-2}{n},\ \  \textrm{i.e.}, \ \ p\sigma =\frac{n(p+m-1)}{n-2},\ \ \frac{1}{\sigma'}= 1-\frac{1}{\sigma}.
\end{equation}
For $n\ge4$, $m>1+\frac n4$, and $p>1$, relation
\eqref{def sigma} ensures that $\sigma,\sigma'>1$.

Next, we apply Lemma~\ref{max_sobolev_regularity} to the equation for
$v$, with spatial exponent $\sigma'$ and temporal exponent
$\frac{p+m-1}{m-1}$, to obtain
\begin{equation}\label{prop-pf-parabolic}
\int_0^t e^{s-t}
 \|\Delta v\|_{L^{\sigma'}}^{\frac{p+m-1}{m-1}}\,ds \le C\int_0^t e^{s-t}
 \|uw\|_{L^{\sigma'}}^{\frac{p+m-1}{m-1}}\,ds +C.
\end{equation}
To estimate the source term $uw$, define $\beta$ by
$$
 \frac1{\sigma'}=\frac1{p\sigma}+\frac1\beta.
$$
Using \eqref{def sigma}, we compute
$$
 \frac{1}{\beta}
 =1-\frac{p+1}{p\sigma}
 =1-\frac{p+1}{p+m-1}\frac{n-2}{n}.
$$
In particular, $1<\beta<\frac n2$. We fix $\gamma>\frac n2$.
Moreover,
$$
 \lim_{p\to\infty}\frac1\beta
 =\frac2n
 \le\frac{n-2}{n}<\frac1{p_0},
$$
so there exists $P_1>1$, depending only on $n$, $m$, and $p_0$,
such that $\gamma>\beta>p_0$ whenever $p>P_1$. We may therefore
choose $\theta\in(0,1)$ uniquely by
$$
 \frac1\beta=\frac{1-\theta}{p_0}+\frac\theta\gamma.
$$
H\"older's inequality, interpolation, and Lemma~\ref{lp_w_kappa1}
then give
\begin{equation}\label{prop-pf-holder}
 \|uw\|_{L^{\sigma'}}
 \le \|u\|_{L^{p\sigma}}\|w\|_{L^\beta}
 \le \|u\|_{L^{p\sigma}}
      \|w\|_{L^{p_0}}^{1-\theta}\|w\|_{L^\gamma}^{\theta}
 \le C\|u\|_{L^{p\sigma}}\|w\|_{L^\gamma}^{\theta}.
\end{equation}
It follows from \eqref{prop-pf-parabolic} and
\eqref{prop-pf-holder} that
$$
\int_0^t e^{s-t}
 \|\Delta v\|_{L^{\sigma'}}^{\frac{p+m-1}{m-1}}\,ds
 \le C\int_0^t e^{s-t}
 \|u\|_{L^{p\sigma}}^{\frac{p+m-1}{m-1}} \|w\|_{L^\gamma}^{\theta\frac{p+m-1}{m-1}}\,ds +C,
$$
Since $n\ge4$ and $m>1+\frac n4$, we have $m>2$, and hence
$$
 \frac{p+m-1}{p+1}>1,
 \qquad
 k:=\frac{p+m-1}{m-2}>1.
$$
These two exponents are conjugate. Together with
\eqref{prop-pf-Delta v-1}, the preceding estimate yields
\begin{align}
 &\int_0^t e^{s-t}\int_\Omega u^p|\Delta v|\,dx\,ds\notag\\
 &\quad\le C\left(\int_0^t e^{s-t}
 \|u\|_{L^{p\sigma}}^{p+m-1}\,ds\right)^{\frac{p}{p+m-1}}
 \left(
 \int_0^t e^{s-t}
 \|u\|_{L^{p\sigma}}^{\frac{p+m-1}{m-1}} \|w\|_{L^\gamma}^{\theta\frac{p+m-1}{m-1}}\,ds
 \right)^{\frac{m-1}{p+m-1}}\notag\\
 &\qquad{}+C\left(\int_0^t e^{s-t}
 \|u\|_{L^{p\sigma}}^{p+m-1}\,ds\right)^{\frac{p}{p+m-1}}\notag\\
 &\quad\le C\left(\int_0^t e^{s-t}
 \|u\|_{L^{p\sigma}}^{p+m-1}\,ds\right)^{\frac{p+1}{p+m-1}}
 \left(\int_0^t e^{s-t}\|w\|_{L^\gamma}^{\theta k}\,ds
 \right)^{\frac{m-2}{p+m-1}}\notag\\
 &\qquad{}+C\left(\int_0^t e^{s-t}
 \|u\|_{L^{p\sigma}}^{p+m-1}\,ds\right)^{\frac{p}{p+m-1}}\notag\\
 &\quad= C\left(\int_0^t e^{s-t}
 \|u\|_{L^{p\sigma}}^{p+m-1}\,ds\right)^{1-\frac1k}
 \left(\int_0^t e^{s-t}\|w\|_{L^\gamma}^{\theta k}\,ds
 \right)^{\frac1k}\notag\\
 &\qquad{}+C\left(\int_0^t e^{s-t}
 \|u\|_{L^{p\sigma}}^{p+m-1}\,ds\right)^{\frac{p}{p+m-1}}.
 \label{prop-pf-Delta v-2}
\end{align}
Here we used H\"older's inequality in the second step and the definition
of $k$ in the last step. Consequently,
\eqref{Lp est 1} and \eqref{prop-pf-Delta v-2} yield
\begin{align}
 \int_\Omega u^p
 \le & -C_0\int_0^t e^{s-t}\|u\|_{L^{p\sigma}}^{p+m-1}\,ds\notag\\
  & +   C\left( \int_0^t e^{s-t}  \|u\|_{L^{p\sigma}}^{p+m-1}\,ds \right)^{1-\frac{1}{k}}
 \left(\int_0^t e^{s-t} \|w\|_{L^\gamma}^{\theta k}\,ds
 \right)^{\frac{1}{k}}\notag\\
 & +C\left(
 \int_0^t e^{s-t}
 \|u\|_{L^{p\sigma}}^{p+m-1}\,ds
 \right)^{\frac{p}{p+m-1}}+ C.
 \label{Lp est 2}
\end{align}

It remains to control
$$
 \int_0^t e^{s-t}\|w\|_{L^\gamma}^{\theta k}\,ds.
$$
As in \eqref{lm-pf-rho}, define $\rho$ by
$$
 \frac1\rho=\frac1{\sigma'}-\frac2n>0.
$$
If
$$
 p>\max\left\{P_1,\frac{(n-4)(m-1)}2\right\},
$$
then
$$
 \frac1\rho-\frac2n=\frac1{\sigma'}-\frac4n<0.
$$
Together with $\gamma>\beta>p_0$, this ensures the existence of a
unique $\alpha\in(0,1)$ satisfying
$$
 \frac1\gamma
 =\alpha\left(\frac1\rho-\frac2n\right)
  +\frac{1-\alpha}{p_0}.
$$
The parameters $p_0$, $\alpha$, $\theta$, and $k$ are now defined
exactly as in Proposition~\ref{prop-u-n_>=4}. Hence the final parameter
verification in that proof shows that, for all sufficiently large $p$,
$$
 \alpha\theta k>1
 \qquad\text{and}\qquad
 \frac{\alpha\theta}{1-\alpha\theta}k<p+m-1.
$$
{Applying Lemma~\ref{lem:GN}(ii) exactly as in the derivation of
\eqref{lm-pf-higher-w-1} in Proposition~\ref{prop-u-n_>=4}, we obtain}
\begin{align}\label{prop-pf-higher-w-1-parabolic}
\int_0^t e^{s-t}\|w\|_{L^\gamma}^{\theta k}\,ds
 \le   C\int_0^t e^{s-t}\|v\|_{L^\rho}^{\alpha\theta k}\,ds+C,
\end{align}
where we have used the first of these conditions, namely
\begin{equation*}
\alpha\theta k=\alpha\theta \cdot\frac{p+m-1}{m-2}>1.
\end{equation*}
Moreover, the Sobolev embedding, the Neumann elliptic estimate, and
the uniform $L^1$-bound for $v$ yield
$$
 \|v\|_{L^\rho}\le C\|\Delta v\|_{L^{\sigma'}}+C.
$$
Substituting this estimate into
\eqref{prop-pf-higher-w-1-parabolic} and applying
Lemma~\ref{max_sobolev_regularity} once more to
$$
v_t=\Delta v-v+uw,
$$
with time exponent $r=\alpha\theta k$, we obtain
\begin{align*}
 \int_0^t e^{s-t}\|w\|_{L^\gamma}^{\theta k}\,ds
 &\le C\int_0^t e^{s-t}
       \|\Delta v\|_{L^{\sigma'}}^{\alpha\theta k}\,ds+C \\
 &\le C\int_0^t e^{s-t}
       \|uw\|_{L^{\sigma'}}^{\alpha\theta k}\,ds+C.
\end{align*}
Repeating the derivation of \eqref{lm-pf-higher-w-2}, including the
absorption of the corresponding $w$-term, we obtain
$$
 \int_0^t e^{s-t}\|w\|_{L^\gamma}^{\theta k}\,ds
 \le C\int_0^t e^{s-t}
 \|u\|_{L^{p\sigma}}^{\frac{\alpha\theta}{1-\alpha\theta}k}\,ds+C.
$$
Substituting this estimate into \eqref{Lp est 2}, we find
\begin{align*}
 \int_\Omega u^p
 \le & -C_0\int_0^t e^{s-t}\|u\|_{L^{p\sigma}}^{p+m-1}\,ds\\
  & +   C\left( \int_0^t e^{s-t}  \|u\|_{L^{p\sigma}}^{p+m-1}\,ds \right)^{1-\frac{1}{k}}
 \left(\int_0^t e^{s-t} \|u\|_{L^{p\sigma}}^{\frac{\alpha\theta}{1-\alpha\theta}k}\,ds+1 \right)^{\frac{1}{k}}\\
 & +C\left(
 \int_0^t e^{s-t}
 \|u\|_{L^{p\sigma}}^{p+m-1}\,ds
 \right)^{\frac{p}{p+m-1}}+ C.
\end{align*}
The second parameter inequality above ensures that
$$
 \frac{\alpha\theta}{1-\alpha\theta}k<p+m-1,
$$
whereas $\frac{p}{p+m-1}<1$. Young's inequality therefore allows both
positive contributions to be absorbed by the negative term. We conclude
that
$$
 \int_\Omega u^p(\cdot,t)\le C
 \qquad\text{for all }t\in(0,T_{\max}),
$$
which proves the proposition.
\end{proof}

\section*{Data availability}
No data were used for the research described in this article.

{
\section*{Acknowledgments}
\sloppy
X. Bai is supported by the National Natural Science Foundation of China
(Nos. 12671249 and 12271437) and the Guangdong Basic and Applied Basic
Research Foundation (No. 2024A1515010189). F. Li is supported by the
National Natural Science Foundation of China (No. 12371213).

}

\end{document}